\newtheorem{theorem}{Theorem}
\newtheorem{lemma}{Lemma}
\newtheorem{proposition}{Proposition}
\bmdefine{\Bt}{t}
\bmdefine{\BX}{X}
\bmdefine{\BY}{Y}
\bmdefine{\BZ}{Z}
\bmdefine{\BB}{B}
\bmdefine{\BM}{M}
\bmdefine{\BD}{D}
\bmdefine{\Bi}{i}
\bmdefine{\Bj}{j}
\bmdefine{\Bk}{k}
\bmdefine{\Bx}{x}
\bmdefine{\By}{y}
\bmdefine{\Bz}{z}
\bmdefine{\Bv}{v}
\bmdefine{\Bw}{w}
\bmdefine{\Bn}{n}
\bmdefine{\Ba}{a}
\bmdefine{\Bb}{b}
\bmdefine{\Bc}{c}
\bmdefine{\Be}{e}
\bmdefine{\Bu}{u}
\bmdefine{\Bp}{p}
\bmdefine{\Bzero}{0}
\bmdefine{\Bone}{1}
\newcommand{\C}{{\mathbb C}}
\newcommand{\interm}{{\operatorname{in}_{\succ}}}
\def\diag{{\mathop {\rm diag}}\nolimits}
\def\2{{1 \over \,2\,}}
\def\diag{\mathop{{\rm diag}}}
\def\tr{\mathop{\rm tr}}
\def\hyperF#1#2{{}_{#1}\kern-.05emF_{#2}}
\def\pdop{\partial}
\newcommand{\partitionof}{\vdash}
\newcommand{\der}{\partial}
\newcommand{\initial}{\operatorname{in}}
\newcommand{\ch}{\operatorname{ch}}
\newcommand{\CC}{\mathbb{C}}
\newcommand{\dominatedby}{\unlhd}
\newcommand{\dominates}{\unrhd}
\newcommand{\concat}{\uplus}
\newcommand{\emptypartition}{\varnothing}
\title{Holonomic gradient method for the distribution function
of the largest root of a Wishart matrix}
\author{
Hiroki Hashiguchi\thanks{
Graduate School of Science and Engineering, Saitama University} , 
Yasuhide Numata\thanks{Department of 
Mathematical Informatics, 
Graduate School of Information Science and Technology, 
University of Tokyo} \thanks{JST CREST} , 
Nobuki Takayama\thanks{Department of Mathematics, Kobe University}
\footnotemark[3]\\
and Akimichi Takemura\footnotemark[2] \footnotemark[3]
}
\date{January 2012}
\begin{document}

\maketitle

\begin{abstract}
We apply the holonomic gradient method introduced by \citet{hgd} 
to the evaluation of the exact distribution function of the
largest root of a Wishart matrix, which involves a hypergeometric
function $\hyperF{1}{1}$ of a matrix argument.  Numerical evaluation of the
hypergeometric function has been one of the longstanding problems in
multivariate distribution theory. The holonomic gradient method offers
a totally new approach, which is complementary to the infinite series
expansion around the origin in terms of zonal polynomials.  It allows
us to move away from the origin by the use of partial differential
equations satisfied by the hypergeometric function.  From numerical
viewpoint we show that the method works well up to dimension 10.  From
theoretical viewpoint the method offers many challenging problems both
to statistics and $D$-module theory.
\end{abstract}

\noindent
{\it Keywords and phrases}: $D$-modules, Gr\"obner basis, hypergeometric function of a matrix argument, zonal polynomial

\section{Introduction}
For multivariate distribution theory in statistics, the
theory of zonal polynomials and hypergeometric functions of 
matrix arguments, introduced by A.T.\ James and other authors,
was a very important development in the 1950's. They allowed
explicit expressions of density functions and 
cumulative distribution functions of basic test statistics
under non-null cases.  Zonal polynomials are based
on the representation theory of real general linear group and
they possess many interesting combinatorial properties.
Properties and applications of zonal
polynomials and hypergeometric functions of  matrix arguments
are surveyed in \citet{gross-richards-1987} and \citet{richards-2010}.
Zonal polynomials are special cases of Jack polynomials, whose
properties have been intensively studied by many mathematicians.
See for example Chapters VI and VII of \citet{macodonald-book}
and \citet{stanley-1989am}. 
Jack polynomials are further generalized to Macdonald polynomials
(see, e.g., \citet{kuznetsov-sahi-2006}).  

Zonal polynomials and hypergeometric functions of matrix arguments 
are important and difficult to 
compute in non-null cases rather than the null case, where the covariance matrix is a  
multiple of the identity matrix.
In the null case there are several approaches to obtain the distribution function or moments.
Recent representative approach is to use the random matrix theory (RMT) and 
the landmark study on the connection  between RMT and multivariate analysis was conducted by 
\citet{johnstone-2001, johnstone-2008}.
\citet{bulter-paige-2011} proposed a method to compute the exact null distributions based on 
their Pfaffian representation given by  \citet{gupta-richards-1985}. 

Despite the above nice mathematical properties of zonal polynomials and
hypergeometric functions of matrix arguments, from practical viewpoint
they were not really useful for computations.  Coefficients of zonal polynomials
can be computed only through nontrivial combinatorial recursions.
Although very ingenious recursion algorithms have been recently developed
(\citet{koev-edelman}), computing zonal polynomials of large degrees
remains to be a difficult problem  because of inherent combinatorial complexities.
Also, the convergence of infinite series expansion of  
hypergeometric functions of a  matrix argument in terms of zonal
polynomials was found to be slow (\citet{muirhead-1978}, \citet{hashiguchi-niki-2006}).
Since the expansion of the hypergeometric function in terms of zonal polynomials
is the expansion {\it at the origin}, the convergence for large values of the argument
is necessarily slow.

The holonomic gradient method allows us to move away from the origin by the
use of partial differential equations.
Thus our approach provides a promising new method for attacking 
a longstanding problem in multivariate statistics.
Our holonomic gradient method is, in spirit, on the track of the holonomic
systems approach to combinatorial identities by 
\citet{zeilberger-1990jcam}.
Note that the series expansion and our holonomic
gradient method are in fact complementary methods, because our method needs the 
series expansion for obtaining initial values
for the partial differential equations. 

The main purpose of this paper is to verify the performance of holonomic
gradient method for $\hyperF{1}{1}$.  We found that a straightforward implementation of the
holonomic gradient method works well for dimensions up to 10.

\citet{butler-wood-2002} showed that
the Laplace method gives a very good approximation to $\hyperF{1}{1}$ even for
a high dimension, e.g., $m=32$. However the Laplace method needs a peaked density function, which
corresponds to a large degrees of freedom.  Our method is an exact method, where the 
errors only come from discretization in numerically solving differential equations and
the accuracies in the initial values.
Hence our method works even for small degrees of freedom.

The organization of this paper is as follows.
In Section \ref{sec:preliminaries} we summarize preliminary facts on the
exact distribution of the largest root of a Wishart matrix.  In particular we state
the partial differential equation for $\hyperF{1}{1}$ by \citet{muirhead-1970}.
In Section \ref{sec:dim2}, for expository purposes, we fully describe our holonomic
gradient method for dimension two.
In Section \ref{sec:general-dimension} we derive properties of Pfaffian system for 
general dimensions.
The Pfaffian system is a system of partial differential equations
and is called an integrable connection in some literatures.
Results of symbolic computations are presented in Section \ref{sec:symbolic-computation} and
results of numerical experiments are presented in Section \ref{sec:numerical}. 
We end the paper with discussion of open problems in 
Section \ref{sec:discussion}.

\section{Preliminaries}
\label{sec:preliminaries}

Let $\kappa=(k_1,\dots, k_l) \partitionof k$ be 
a partition of a non-negative integer $k$ and define
the Pochhammer symbol $(a)_\kappa$ by 
\begin{eqnarray}
(a)_{\kappa} = \prod_{i=1}^{l}\left(a-\frac{i-1}{2}\right)_{k_{i}}, 
\quad (a)_{k_{i}} = \prod_{j=1}^{k_{i}}(a+j-1) \ \ \ 
((a)_{0} = 1).
\nonumber
\end{eqnarray}
Let ${{\cal C}_\kappa(Y)}$ denote the (``$C$-normalization'' of)  zonal
polynomial indexed by $\kappa$ 
of an $m\times m$ symmetric matrix $Y$. It
is a homogeneous symmetric
polynomial of degree $k$ in the characteristic roots $y_1,\dots, y_m$ of $Y$,
satisfying $\sum_{\kappa\partitionof k} {\cal C}_\kappa(Y)=(\tr Y)^k$. 
For zonal polynomials  in statistics 
see, e.g., \citet{james-1964ams},  \citet{muirhead-book}, \citet{takemura-zonal} and
\citet{mathai-provost-hayakawa}.
A hypergeometric function of a matrix argument is defined
(\citet{constantine-1963}) as
\begin{equation}
\label{eq:def-hyper}
\hyperF{p}{q}(a_1,\dots, a_p;c_1, \dots, c_q; Y)=
\sum_{k=0}^\infty \sum_{\kappa \partitionof k} 
\frac{(a_1)_\kappa \dots (a_p)_\kappa}{(c_1)_\kappa \dots (c_q)_\kappa}
\frac{{\cal C}_\kappa(Y)}{k!}.
\end{equation}

In this paper we study holonomic gradient method for $\hyperF{1}{1}(a;c;Y)$.
Let  $I_m$ denote the $m\times m$ identity matrix and
let $|X|$ denote the determinant of $X$. 
For $\Re a > (m+1)/2$,  $\Re (b-a)> (m+1)/2$, 
$\hyperF{1}{1}(a;c;Y)$ has the following integral representation
\begin{equation}
\label{eq:integral-representation}
\hyperF{1}{1}(a;c;Y)=\frac{\Gamma_m(b)}{\Gamma_m(a)\Gamma_m(c-a)}
\int_{0 < X < I_m} \exp(\tr XY) |X|^{a-(m+1)/2}|I_m - X|^{c-a-(m+1)/2} dX,
\end{equation}
where  $0 < X < I_m$ means that $X$ and $I_m-X$ are positive definite, 
$dX=\prod_{i\le j}dx_{ij}$ 
is the Lebesgue measure of the upper triangular entries of $X$, and
\[
\Gamma_{m}(a) = \pi^{\frac{1}{4}m(m-1)}
\prod_{i=1}^{m}\Gamma \left(a-\frac{i-1}{2}\right).
\]
The hypergeometric function
$\hyperF{1}{1}$ satisfies the the following Kummer relation (see (2.8) of \citet{herz-1955}, (51) of \citet{james-1964ams}):
\begin{equation}
\label{eq:kummer}
\exp(- \tr Y) \hyperF{1}{1}(a;c;Y)=\hyperF{1}{1}(c-a, c; -Y).
\end{equation}
Note that \eqref{eq:integral-representation} implies that $\hyperF{1}{1}$ is an entire function
in $Y$.

The cumulative distribution function of the
largest root $\ell_1$ of the  $m\times m$ Wishart matrix $W$ with $n$ degrees of freedom
and the covariance matrix $\Sigma$ is written
as follows 
\begin{equation}
\Pr[\ell_{1} < x] = C \exp \left(-\frac{x}{2} \tr \Sigma^{-1} \right)
x^{\frac{1}{2}nm} \hyperF{1}{1} \left(\frac{m+1}{2};\frac{n+m+1}{2};
\frac{x}{2} \Sigma^{-1}\right) ,
\label{eq:2-1}
\end{equation}
where 
\[
C = \frac{\Gamma_{m}
\left(\frac{m+1}{2}\right)}{2^{\frac{1}{2}nm}
(\det \Sigma)^{\frac{1}{2} n} \Gamma_{m}
\left(\frac{n+m+1}{2}\right)}.
\]
This follows from the results in Section 9 of \citet{constantine-1963}
and the Kummer relation \eqref{eq:kummer}.  See also \citet{sugiyama-1967ams}.


The following  partial differential equations for $\hyperF{1}{1}(a;b;Y)$ 
were derived by 
\citet{muirhead-1970}.
\begin{theorem}[Theorem 5.1 of \citet{muirhead-1970}, Theorem 7.5.6 of \citet{muirhead-book}]
\label{thm:muirhead}
The hypergeometric function  $F=\hyperF{1}{1}(a; c; Y)$ of a matrix argument $Y=\diag(y_1,\dots, y_m)$
is the unique solution of the following set of $m$ partial differential
equations
\begin{align} \label{eqn:PDE1F1}
\left[ y_i \, \pdop_i^2 + \left\{ c - \frac{m-1}{2} -y_i + \2 \sum_{j=1, j \neq i}^m
\frac{y_i}{y_i - y_j} \right \} \pdop_i - 
\2  \sum_{j=1, j \neq i}^m \dfrac{y_j}{y_i - y_j}  \pdop_j 
-  a \right]&F=0, \\
& (i=1, \dots, m),\nonumber
\end{align}
subject to the conditions  that  $F$ is symmetric in $y_1, \dots, y_m$ and 
$F$ is analytic at $Y=0$, $F(0)=1$.
\end{theorem}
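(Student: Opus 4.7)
The plan is to prove the two halves of the statement separately: first that $F=\hyperF{1}{1}(a;c;Y)$ solves the system (\ref{eqn:PDE1F1}), and then that symmetry, analyticity at the origin, and $F(0)=1$ pin down the solution uniquely.

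For existence, I would work directly with the zonal-polynomial series (\ref{eq:def-hyper}). Because $\hyperF{1}{1}(a;c;Y)$ is manifestly symmetric in $y_1,\dots,y_m$, it suffices to verify (\ref{eqn:PDE1F1}) for $i=1$; the remaining $m-1$ equations follow by transposing $y_1$ with $y_i$. I would apply the operator on the left of (\ref{eqn:PDE1F1}) term by term to the series, invoking James's eigenvalue identity for the zonal polynomials $\cC_\kappa$ together with the Pochhammer shift $(a)_{\kappa+\mathbf{1}_i}/(a)_\kappa = a+k_i-(i-1)/2$, and show that the $y_1\partial_1^2$ and $-y_1\partial_1$ contributions conspire with the cross terms to collapse the sum to $a\hyperF{1}{1}(a;c;Y)$. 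An alternative and arguably cleaner route goes through the integral representation (\ref{eq:integral-representation}): for parameters with $\Re a,\Re(c-a)>(m+1)/2$, differentiate under the integral sign, note that $\tr XY=\sum_i x_{ii}y_i$ since $Y$ is diagonal, and integrate by parts in $x_{ii}$ to convert the $x_{ii}^{2}y_i$ factor produced by $y_i\partial_i^2$ into the lower-order terms in the operator; the boundary terms vanish thanks to the exponents, the $y_j/(y_i-y_j)$ contributions emerge from the Vandermonde Jacobian of the symmetric-matrix to eigenvalue reduction, and the general $(a,c)$ follows by analytic continuation since both sides are entire in $(a,c,Y)$.

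For uniqueness, let $F$ be any symmetric analytic solution with $F(0)=1$. Using the analyticity and symmetry hypotheses, expand
\[
F(Y) = \sum_{k\ge 0}\sum_{\kappa\partitionof k} b_\kappa\,\cC_\kappa(Y),\qquad b_\emptypartition=1.
\]
Substituting into (\ref{eqn:PDE1F1}) and reading off the coefficient of each $\cC_\kappa$ yields a recurrence that is triangular with respect to the dominance order on partitions, because the operator shifts $\kappa$ only by adding a box or leaves it in place. Starting from $b_\emptypartition=1$, each $b_\kappa$ is then determined inductively, and since $\hyperF{1}{1}(a;c;Y)$ furnishes one such solution (by the existence step) with $b_\kappa=(a)_\kappa/[(c)_\kappa k!]$, we conclude $F=\hyperF{1}{1}(a;c;Y)$.

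The main obstacle will be the algebraic bookkeeping in the existence direction. Whether one follows the series route or the integral route, the cross terms $y_j/(y_i-y_j)$ ultimately encode the eigenvalue Jacobian $\prod_{i<j}(y_i-y_j)$ of the passage from general symmetric matrices to their spectra, and matching these against the partition-shift structure built into the Pochhammer ratios is where the bulk of the delicate computation sits; in particular, invoking James's operator identity for $\cC_\kappa$ in the right normalization (consistent with $\sum_{\kappa\partitionof k}\cC_\kappa(Y)=(\tr Y)^k$) requires care.
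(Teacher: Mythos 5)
First, a point of comparison: the paper does not prove this statement at all --- it is quoted, with attribution, from \citet{muirhead-1970} (Theorem~5.1; Theorem~7.5.6 of \citet{muirhead-book}) --- so there is no in-paper argument to measure yours against, and you are in effect reproving a classical result. Your existence half is broadly the classical route and is sound in outline: by symmetry it does suffice to verify the equation for $i=1$, and the Pochhammer shift $(a)_{\kappa+\mathbf{1}_i}/(a)_\kappa=a+k_i-(i-1)/2$ is the right ingredient. One caveat on the integral route: since $Y$ is diagonal, $y_i\partial_i^2$ only produces factors $x_{ii}^2$, and integrating by parts in the diagonal entries alone cannot generate the cross terms $y_j/(y_i-y_j)$; those come from the off-diagonal directions (the radial part of the flat Laplacian in eigenvalue coordinates, via the orthogonal-group average), so the computation is heavier than the sentence suggests, though the idea is the standard one.

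The uniqueness half has a genuine gap. You substitute $F=\sum_\kappa b_\kappa\,{\cal C}_\kappa$ into \eqref{eqn:PDE1F1} and propose to ``read off the coefficient of each ${\cal C}_\kappa$,'' claiming a recurrence that is triangular for dominance because the operator ``shifts $\kappa$ only by adding a box or leaves it in place.'' Two problems. First, a single operator $g_i$ does not preserve symmetry, so $g_i{\cal C}_\kappa$ has no zonal-polynomial expansion and no coefficients to read off; only the symmetrized combination $\sum_i g_i$ acts on symmetric functions. Second, that symmetrized operator is the sum of a degree-preserving part ($-y_i\partial_i-a$) and a degree-\emph{lowering} part --- it removes a box, never adds one --- so the identity obtained in homogeneous degree $k$ expresses a box-removal image of $\{b_\lambda:|\lambda|=k+1\}$ in terms of $\{b_\kappa:|\kappa|=k\}$. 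Since the number of partitions of $k+1$ into at most $m$ parts is in general larger than that of $k$, this is an underdetermined linear system for the level-$(k+1)$ coefficients; there is no dominance-triangularity to invoke, as one is comparing partitions of different sizes. A correct uniqueness argument must extract more from the system than its plain sum, e.g.\ the weighted combinations $\sum_i y_i^r g_i$, whose $r=1$ case yields the degree-preserving Laplace--Beltrami operator with the ${\cal C}_\kappa$ as eigenfunctions --- and even then one must handle the fact that the eigenvalues $\rho_\kappa=\sum_i k_i(k_i-i)$ do not separate all partitions of equal size (e.g.\ $\rho_{(3,3)}=\rho_{(4,1,1)}=9$). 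This separation of coefficients within a fixed degree is the crux of Muirhead's proof, and your sketch does not close it.
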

The partial differential equation
\eqref{eqn:PDE1F1}  has  singularities along $y_i=0$ and 
$y_j=y_i$, $j\neq i$.  However since $F$ is an entire function,
$F$ is determined by the partial differential equations on the  open
region ${\cal X}=\{ y\in \mathbb{C}^m \mid \prod_{i=1}^m y_i \prod_{i\neq j}(y_i - y_j) \neq 0
\}$.  In this paper we call ${\cal X}$ the non-diagonal region.
Using
\[
\frac{y_i}{y_i - y_j} = 1 + \frac{y_j}{y_i-y_j}
\]
we can rewrite  \eqref{eqn:PDE1F1} as $g_i F=0$, $i=1,\dots,m$, where
\begin{equation}
\label{eqn:PDE1f1N}
g_i =  y_i \pdop_i^2 + (c-y_i) \pdop_i +  \2 \sum_{j=1, j \neq i}^m
\frac{y_j}{y_i - y_j} (\pdop_i - \pdop_j) - a 
\end{equation}
is a differential operator annihilating $F$.
In our holonomic gradient method we make a direct use of the 
partial differential equations for numerical evaluation of  $\hyperF{1}{1}$.

\section{Holonomic gradient method for dimension two}
\label{sec:dim2}
In this section we illustrate the holonomic gradient method for the
case of $m=2$.  Although our purpose is to implement an algorithm
of our method for a larger dimension, for clarity it is best 
to do ``by hand'' calculation for the case of $m=2$.
As in the previous section we simply write $F(Y)=\hyperF{1}{1}(a;c;Y)$.

In \citet{hgd} the holonomic gradient method was used to obtain
the maximum likelihood estimate.  The reciprocal of the likelihood
function was minimized and the method was called the holonomic gradient
{\em descent}.  For the application of this paper we simply use
the holonomic gradient method for evaluating $F$.  Hence
we omit the term ``descent''.  Also, for minimization, at each
step of the iteration, a direction for increments was chosen to
decrease the value of the function.  In our application, starting
from the origin $Y=0$, we can choose arbitrary path to the
target value $Y$ where we want to evaluate $F(Y)$.

Another minor difference of the expository explanation in this section
from \citet{hgd} and \citet{so3}  is that we use the simple forward 
Euler method (e.g., Section 3.1 of \citet{ascher-petzold}) for updating
partial derivatives of $F$.
In \citet{hgd}, once an updating direction is chosen
at each step of the iteration, the 4-th order Runge-Kutta method was
used.  The simple Euler method is used {\it only for the purpose of exposition}.
It is easier to explain the basic idea of the holonomic gradient method
with the simple Euler method.
In our actual implementation in Section \ref{sec:numerical} we use the Runge-Kutta method
for numerically solving the differential equation.

We will reduce our problem to a traditional problem of numerical analysis of an ordinary
differential equation (ODE).  For the reduction we utilize the notion of holonomic differential
equations and the gradients of their solutions.  It is why we call our method holonomic gradient method.

In the following we discuss the case of $y_1\neq y_2$ and $y_1 = y_2$ separately.

\subsection{Holonomic gradient method for non-diagonal region} 
\label{subsec:m2-general}
In this subsection we assume $y_1 \neq y_2$.
Two partial differential equations in
\eqref{eqn:PDE1f1N} are written as
\begin{align}
\Big[y_1 \partial_1^2 + (c-y_1) \partial_1 + \frac{1}{2} \frac{y_2}{y_1 - y_2}
(\partial_1 - \partial_2)-a\Big] F &= 0, 
\label{eq:m2i1}\\
\Big[y_2 \partial_2^2 + (c-y_2) \partial_2 + \frac{1}{2} \frac{y_1}{y_2 - y_1}
(\partial_2 - \partial_1)-a\Big] F &= 0.
\label{eq:m2i2}
\end{align}
Suppose that we want to evaluate a higher derivative $\partial_1^{n_1} \partial_2^{n_2} F=\partial_2^{n_2} \partial_1^{n_1}F$ of $F$.
Let $n_2 \ge 2$.  Then by \eqref{eq:m2i2}
\begin{equation}
\label{eq:n22}
\partial_1^{n_1} \partial_2^{n_2} F 
=\partial_1^{n_1} \partial_2^{n_2-2} \big(-\frac{c}{y_2} \partial_2 + \partial_2
-  \frac{1}{2} \frac{y_1}{y_2(y_2 - y_1)}
(\partial_2 - \partial_1) + \frac{a}{y_2}\big)F.
\end{equation}
Noting 
\[
\partial_2 \frac{1}{y_2}= - \frac{1}{y_2^2}, \quad
\partial_2 \frac{y_1}{y_2(y_2-y_1)}= -\frac{y_1 (2y_2 - y_1)}{y_2^2 (y_2 - y_1)^2},
\]
for $n_2 > 2$, the right-hand side of  \eqref{eq:n22} is further written as
\begin{align}
&\partial_1^{n_1} \partial_2^{n_2-3} \Big(\frac{c}{y_2^2} \partial_2
- \frac{c-y_2}{y_2} \partial_2^2 
+  \frac{1}{2} \frac{y_1(2y_2-y_1)}{y_2^2(y_2 - y_1)^2}(\partial_2 - \partial_1)
\nonumber \\
& \qquad \qquad \ 
-  \frac{1}{2} \frac{y_1}{y_2(y_2 - y_1)}(\partial_2^2 - \partial_1 \partial_2)
-  \frac{a}{y_2^2}
+ \frac{a}{y_2}\partial_2\Big)F.
\label{eq:n22a}
\end{align}
Although the result is somewhat complicated,  the important fact is
that the total degree of differentiation $n_1+n_2$ on the left-hand side 
of \eqref{eq:n22} is decreased by one to $n_1 + n_2 -1$ in \eqref{eq:n22a}.
As long as the degree of $\partial_1$ or $\partial_2$ is more than one,
then we can recursively apply 
\eqref{eq:m2i1} or \eqref{eq:m2i2} to decrease the total degree of
differentiation.  It follows that for each $n_1,n_2$, there
exist rational functions 
$h^{(n_1, n_2)}_{00}, h^{(n_1, n_2)}_{10},h^{(n_1, n_2)}_{01},
h^{(n_1, n_2)}_{11}$ in $(y_1, y_2)$ 
such that
\begin{equation}
\label{eq:n1n2base}
\partial_1^{n_1}\partial_2^{n_2}F = 
h^{(n_1, n_2)}_{00} F + h^{(n_1, n_2)}_{10}\partial_1 F + h^{(n_1, n_2)}_{01}
\partial_2 F  + h^{(n_1, n_2)}_{11} \partial_1 \partial_2 F .
\end{equation}
In this notation 
\eqref{eq:m2i1} is written as
\begin{align}
\partial_1^2 F &= \frac{a}{y_1} F 
-(\frac{c-y_1}{y_1} +\frac{1}{2}\frac{y_2}{y_1(y_1-y_2)}) \partial_1 F 
+ \frac{1}{2}\frac{y_2}{y_1(y_1-y_2)} \partial_2 F \nonumber \\
&= h^{(2,0)}_{00} F + h^{(2,0)}_{10} \partial_1 F + h^{(2,0)}_{01}\partial_2 F
\qquad (h^{(2,0)}_{11}\equiv 0).
\label{eq:m2i1-h}
\end{align}
For a general dimension, \eqref{eq:n1n2base} corresponds to the reduction by a Gr\"obner basis as
discussed in Section \ref{sec:general-dimension}.

For us the important case is $n_1=1, n_2=2$.
Since 
\[
\partial_1 \frac{y_1}{y_2(y_2-y_1)}= \partial_1 \big( \frac{1}{y_2 - y_1} 
- \frac{1}{y_2}\big)= \frac{1}{(y_2-y_1)^2}
\] 
we have
\begin{align*}
\partial_1 \partial_2^2 F &
= \partial_1 
\big(-\frac{c-y_2}{y_2} \partial_2 
-  \frac{1}{2} \frac{y_1}{y_2(y_2 - y_1)}
(\partial_2 - \partial_1) + \frac{a}{y_2}\big)F\\
&= \big(
-\frac{c-y_2}{y_2} \partial_1\partial_2 
- \frac{1}{2} \frac{1}{(y_2-y_1)^2}(\partial_2 - \partial_1)  
-  \frac{1}{2} \frac{y_1}{y_2(y_2 - y_1)} (\partial_1 \partial_2 - \partial_1^2) 
+ \frac{a}{y_2}\partial_1\big) F.
\end{align*}
There is a term $y_1\partial_1^2$ on the right-hand side, into which we 
further substitute \eqref{eq:m2i1}.  Then
\eqref{eq:n1n2base} for $\partial_1 \partial_2^2 F$ is written as
\begin{align}
\partial_1 \partial_2^2 F &=
\Big(
-\frac{c-y_2}{y_2} \partial_1\partial_2  
- \frac{1}{2} \frac{1}{(y_2-y_1)^2}(\partial_2 - \partial_1)  
-  \frac{1}{2} \frac{y_1}{y_2(y_2 - y_1)} \partial_1 \partial_2 
+ \frac{a}{y_2}\partial_1\nonumber \\
& \qquad - 
\frac{1}{2y_2(y_2 - y_1)}\big((c-y_1) \partial_1 + \frac{1}{2}\frac{y_2}{y_1 - y_2}
(\partial_1 - \partial_2) - a \big)
\Big) F \nonumber\\
&= \frac{a}{2y_2(y_2 - y_1)} F  + 
\Big( \frac{3}{4} \frac{1}{(y_2-y_1)^2} + \frac{a}{y_2} 
- \frac{c-y_1}{2y_2(y_2 - y_1)} \Big)  \partial_1 F \nonumber\\
& \quad 
- \frac{3}{4} \frac{1}{(y_2-y_1)^2}\partial_2 F
- \Big(\frac{c-y_2}{y_2} + \frac{1}{2}\frac{y_1}{y_2(y_2 - y_1)} \Big)  \partial_1 \partial_2 F \nonumber \\
&= h^{(1,2)}_{00}F + h^{(1,2)}_{10} \partial_1 F + h^{(1,2)}_{01}\partial_2 F 
+ h^{(1,2)}_{11} \partial_1 \partial_2 F.
\label{eq:12-deriv-F}
\end{align}
Since $F$ is a symmetric function in $y_1$ and $y_2$, $\partial_1^2 \partial_2 F$
is obtained by permuting $y_1$ and $y_2$.

Let 
\[
\vec F = \begin{pmatrix}F \\ \partial_1 F\\ \partial_2 F\\ \partial_1 \partial_2 F
\end{pmatrix}
\]
denote the vector consisting of $F$ and its square-free mixed derivatives. 
Differentiate the components of $\vec F$ by $y_1$ and denote  $\partial_1 \vec F=(\partial_1 F, \partial_1^2 F, \partial_1 \partial_2 F, \partial_1^2\partial_2 F)^t$. Similarly define $\partial_2 \vec F$. Then 
by \eqref{eq:m2i1-h} and  \eqref{eq:12-deriv-F}, $\partial_i \vec F$, $i=1,2$, are written as
$\partial_i \vec F=P_i(Y)\vec F$, 
where  $P_1$ and $P_2$ are the following $4\times 4$ matrices with
rational function entries
\[
P_1(Y) = \begin{pmatrix} 0 & 1 & 0 & 0 \\
h^{(2,0)}_{00} & h^{(2,0)}_{10} & h^{(2,0)}_{01} & 0\\
0 & 0 & 0 & 1\\
h^{(2,1)}_{00} & h^{(2,1)}_{10} & h^{(2,1)}_{01} & h^{(2,1)}_{11}
\end{pmatrix}, \quad
P_2(Y) = \begin{pmatrix} 0 & 0 & 1 & 0 \\
0 & 0 & 0 & 1\\
h^{(0,2)}_{00} & h^{(0,2)}_{10} & h^{(0,2)}_{01} & 0\\
h^{(1,2)}_{00} & h^{(1,2)}_{10} & h^{(1,2)}_{01} & h^{(1,2)}_{11}
\end{pmatrix}.
\]
The matrices $P_1, P_2$ are called coefficient matrices of a {\em Pfaffian system} 
(an integrable connection) for $F$ (\citet{hgd}).
Note that $P_2$ is obtained from $P_1$ by permutation of $y_1$ and $y_2$.
If we know the values of the components of $\vec F$
at $Y=(y_1, y_2)$, $y_1 \neq y_2$, then 
values at a nearby point $Y+\Delta Y=(y_1 + \Delta y_1 , y_2 + \Delta y_2)$ can be 
approximated by the simple Euler method (i.e.\ linear approximation) as
\begin{align}
\vec F(Y+\Delta Y)&\doteq \vec F(Y) + \Delta y_1 \partial_1 \vec F(Y) + \Delta y_2 \partial_2 \vec F(Y) 
\nonumber \\
&=\vec F(Y) + \Delta y_1 P_1 (Y)\vec F(Y) + \Delta y_2  P_2(Y)\vec F(Y).
\label{eq:euler-approx}
\end{align}

Now suppose that we want to evaluate $F(y_1, y_2)$ at a particular point $(y_1, y_2)$ 
with $y_1\neq y_2$.
If we know $\vec F(Y_0)$ at some point $Y_0=(y_1^{(0)},y_2^{(0)})$,
$y_1^{(0)}\neq y_2^{(0)}$, close to the origin, then 
we can choose an appropriate sequence of points
$Y^{(l)}=(y_1^{(l)},y_2^{(l)})$, $l=0,\dots,L$, such that 
$(y_1, y_2)=(y_1^{(L)},y_2^{(L)})$.
Along the sequence we can use 
\eqref{eq:euler-approx} to update $\vec F(Y^{(l)})$ and finally
the first element of $\vec F(Y^{(L)})$ gives $F(y_1, y_2)$.

Therefore it remains to consider how to obtain the initial values.
Close to the origin we can use the definition \eqref{eq:def-hyper}
of $\hyperF{1}{1}$. If $Y$ is very close to zero, then
we only need zonal polynomials of low orders, whose explicit forms
are known.  
Zonal polynomials up to the third order are as follows; \ 
${\cal C}_{(1)}(Y)={\cal M}_{(1)}(Y)$, 
\begin{equation}
\begin{pmatrix}
{\cal C}_{(2)}(Y) \\
{\cal C}_{(1,1)}(Y) 
\end{pmatrix} = 
\begin{pmatrix}
1 & \dfrac{2}{3}  \\[1.6ex]
0   & \dfrac{4}{3} 
\end{pmatrix}
\begin{pmatrix}
{\cal M}_{(2)} (Y)\\
{\cal M}_{(1,1)} (Y)
\end{pmatrix},
\quad 
\begin{pmatrix}
{\cal C}_{(3)} (Y)\\
{\cal C}_{(2,1)} (Y)\\
{\cal C}_{(1,1,1)} (Y)
\end{pmatrix} = 
\begin{pmatrix}
1 & \dfrac{3}{5} & \dfrac{2}{5} \\[1.6ex]
0   & \dfrac{12}{5} & \dfrac{18}{5} \\[1.2ex]
0  & 0 & 2
\end{pmatrix}
\begin{pmatrix}
{\cal M}_{(3)}(Y) \\
{\cal M}_{(2,1)} (Y)\\
{\cal M}_{(1,1,1)} (Y)
\end{pmatrix}, 
\label{eq:zonal-monomial}
\end{equation}
where ${\cal M}_{\kappa}(Y)$ is the  monomial symmetric polynomial
associated with a partition $\kappa$.
Since $ F(y_1, y_2)$ can be expanded as
\begin{align}
F(y_1, y_2) &= 1 + \frac{(a)_{(1)}}{(c)_{(1)}} {\cal C}_{(1)}(Y) 
 + \frac{1}{2 !} \left( \frac{(a)_{(2)}}{(c)_{(2)}} {\cal C}_{(2)}(Y)
 + \frac{(a)_{(1,1)}}{(c)_{(1,1)}} {\cal C}_{(1,1)}(Y) \right) +\cdots 
\nonumber  \\
 &= 1+ \frac{(a)_{(1)}}{(c)_{(1)}} {\cal M}_{(1)}(Y) 
 +  \frac{(a)_{(2)}}{2 (c)_{(2)}} {\cal M}_{(2)}(Y)
 + \left(\frac{(a)_{(2)}}{3 (c)_{(2)}}  + \frac{2 (a)_{(1,1)}}{ 3 (c)_{(1,1)}} \right) {\cal M}_{(1,1)}(Y) +\cdots, 
\label{eq:expand-to-monomials}
\end{align}
for an example,  $\partial_1 \partial_2 F(0,0)$ is obtained as
\[
 \partial_1 \partial_2  F(0,0) = \dfrac{(a)_2}{3 (c)_2} + \dfrac{2 a (a-\2)}{3 c (c-\2)}.
\]
In a similar manner, we have 
\begin{eqnarray} 
 \partial_1 F(0,0) =  \partial_2 F(0,0) = \frac{a}{c}, \ 
 \partial_1^2 F(0,0) =  \partial_2^2 F(0,0) = \dfrac{(a)_2}{(c)_2}, 
\nonumber \\ 
 \partial_1^2 \partial_2 F(0,0) =  \partial_2^2 \partial_1 F(0,0) = \dfrac{(a)_3}{5 (c)_3} + 
 \dfrac{4 (a)_2 (a-\2)}{5 (c)_2 (c-\2)}.
\label{eq:initial-others}
\end{eqnarray}
These formulae can be obtained by a symbolic mathematics software, such as the
routines for Jack polynomials in {\tt sage} mathematics software system (\citet{sage}).

In order to obtain the initial value $\vec F(Y_0)$ at $Y_0=(y_1^{(0)}, y_2^{(0)})$ close to the origin, 
we can use the approximation
\begin{equation}
\vec F(y_1^{(0)}, y_2^{(0)}) \doteq \vec F(0,0) + y_1^{(0)} \partial_1 \vec F(0,0) + y_2^{(0)}\partial_2 \vec F(0,0).
\label{eq:linear-initial}
\end{equation}

We code the above procedure using deSolve package in the data analysis system R.
We show a simple source program in Appendix \ref{sec:appendix2}.
In addition, since the zonal polynomials  are  easy to evaluate 
for $m=2$, we also evaluate the series expansion of $\hyperF{1}{1}$ up to $k=150$.
As an example, we compute percentage points by two methods for the case of
$n=3$, $\Sigma={\rm diag}(1/2,1/4)$.
The following percentage points for $\ell_1$ agree in two methods
to 6 digits.

\begin{center}
\begin{tabular}{cccc}
50\% & 90\% &95\% & 99\% \\ \hline
1.63785 & 3.54999 &  4.31600&  6.05836
\end{tabular}
\end{center}

\citet{butler-wood-2002} proposed the Laplace approximation for $\hyperF{1}{1}$ and 
 \citet{koev-edelman} proposed efficient algorithms for computing the truncation of 
$\hyperF{1}{1}$.  
For $m=2, n= 30$ and $\Sigma={\rm diag}(1/2,1/4)$, 
Figure~\ref{fig:comp_m=2} shows an illustrative example; the Laplace approximation fails to give the 
upper probability and the approximation by the truncation rapidly converges to zero with partitions of degrees which are not sufficiently large.
The distribution function by the holonomic gradient method is stable and accurate even when $x$ is large.

\begin{figure}[t]
\begin{center}
\includegraphics{./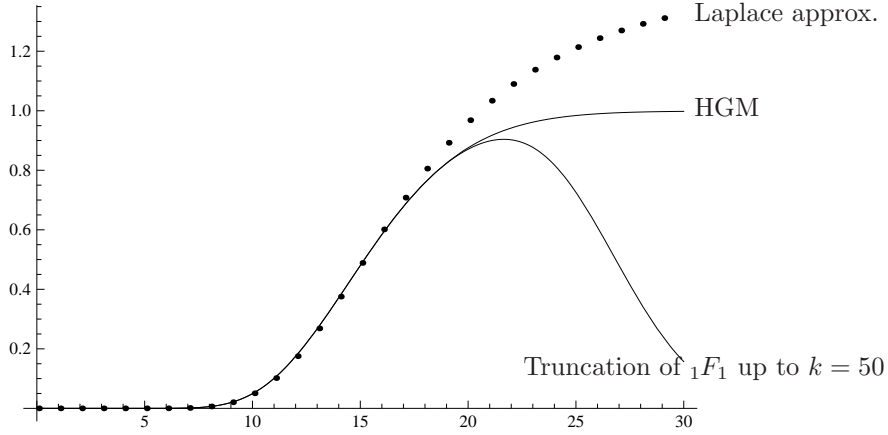}
\rlap{\kern-.5em\raisebox{30ex}{\footnotesize Laplace approx.}}%
\rlap{\kern-.5em\raisebox{23ex}{\footnotesize HGM}}%
\rlap{\kern-6em\raisebox{4ex}{\footnotesize Truncation of $\hyperF{1}{1}$  up to $k=50$}}%

\caption{$m=2, n=30, \Sigma={\rm diag}(1/2,1/4)$} \label{fig:comp_m=2}
\end{center}
\end{figure}

\subsection{Holonomic gradient method for the diagonal line}
\label{subsec:m2-diagonal}

In the previous subsection we assumed $y_1 \neq y_2$ to avoid  singularity of the
differential equations.
However $\hyperF{1}{1}$ itself does not have singularities.  Hence
we should be able to derive some differential equation even for $y=y_1=y_2$.

In \eqref{eq:m2i1} and \eqref{eq:m2i2} we can 
perform the limiting operation $y_1 \rightarrow y_2=y$ using the l'H\^opital rule.
Since $F$ is a symmetric function, at $(y,y)$ we have
\[
\partial_1 F(y,y)=\partial_2 F(y,y).
\]
Also $\partial_1^2 F(y,y)=\partial_2^2 F(y,y)$.
Hence by the
l'H\^opital rule, in \eqref{eq:m2i1} we have
\[
\lim_{y_1 \rightarrow y_2=y} \frac{\partial_1 F - \partial_2 F}{y_1-y_2}
=\partial_2^2 F - \partial_1\partial_2 F = \partial_1^2 F - \partial_1 \partial_2 F.
\]
Then \eqref{eq:m2i1}  for at $(y,y)$ is written as
\begin{equation}
0=\left[y\partial_1^2 + (c-y) \partial_1 + \frac{y}{2} (\partial_1^2 - \partial_1 \partial_2)
- a \right]F
=
\left[\frac{3}{2} y\partial_1^2 + (c-y) \partial_1 - \frac{y}{2} \partial_1 \partial_2 - a \right]F.
\label{eq:fyy0}
\end{equation}
Based on this we derive an ODE 
for $f(y)=F(y,y)$.
Firstly,
\[
f'(y)= 2 \partial_1 F  \text{\  \  or \ \ } \partial_1 F=f'(y)/2.
\]
Secondly,
\[
f''(y)=2 \partial_1^2 F + 2 \partial_1 \partial_2 F.
\]
From \eqref{eq:fyy0}
\[
\frac{3}{2} y\partial_1^2 F=\frac{1}{2}y \partial_1 \partial_2 F- (c-y) \partial_1 F +aF,
\]
and
\begin{align*}
\frac{3}{4} yf''(y) &= \frac{1}{2}y \partial_1 \partial_2 F- (c-y) \partial_1 F +aF
+ \frac{3}{2}y \partial_1 \partial_2 F \nonumber \\
&=2 y \partial_1 \partial_2 F - (c-y) \partial_1 F + aF \nonumber \\
&= 2 y \partial_1 \partial_2 F - \frac{c-y}{2} f'(y) + af(y)
\end{align*}
or
\begin{equation}
\label{eq:fyy3}
\partial_1 \partial_2 F(y,y) = \frac{3}{8} f''(y) + \frac{c-y}{4y}f'(y) - \frac{a}{2y}f(y) .
\end{equation}

Thirdly, 
\begin{equation}
\label{eq:fyy3a}
f'''(y)= 2 \partial_1^3  F + 6 \partial_1^2 \partial_2 F.
\end{equation}
In order to get another relation for $\partial_1^3 F$ and $\partial_1^2 \partial_2 F$,
we differentiate \eqref{eq:m2i1} by $y_2$.  Then by
\begin{equation}
\label{eq:y1y2}
\partial_2 \frac{y_2}{y_1 - y_2} = \partial_2 ( \frac{y_1}{y_1-y_2} - 1)
= \frac{y_1}{(y_1-y_2)^2},
\end{equation}
we obtain the following differential operator  annihilating $F$:
\begin{equation}
y_1 \partial_1^2 \partial_2 + (c-y_1)\partial_1 \partial_2 
+ \frac{1}{2}\frac{y_1}{(y_1 - y_2)^2} (\partial_1 - \partial_2)
+ \frac{1}{2}\frac{y_2}{y_1 - y_2} (\partial_1\partial_2 - \partial_2^2) - a\partial_2.
\label{eq:221}
\end{equation}
Noting  $y_2/(y_1 - y_2)=y_1/(y_1-y_2)-1$ this can be further written as
\begin{align*}
&y_1 \partial_1^2 \partial_2 + (c-y_1)\partial_1 \partial_2 
+ \frac{y_1}{2}\frac{(\partial_1 - \partial_2) + (y_1 - y_2)(\partial_1 \partial_2-\partial_2^2)}{(y_1 - y_2)^2} 
- \frac{1}{2}(\partial_1\partial_2 - \partial_2^2) - a\partial_2\\
&=y_1 \partial_1^2 \partial_2 + (c-1- y_1)\partial_1 \partial_2 
+ \frac{y_1}{2}\frac{(\partial_1 - \partial_2) + (y_1 - y_2)(\partial_1 \partial_2-\partial_2^2)}{(y_1 - y_2)^2} 
+ \frac{1}{2}(\partial_1\partial_2 + \partial_2^2) - a\partial_2.
\end{align*}

We now apply the l'H\^opital rule to
\[
\frac{(\partial_1 - \partial_2) + (y_1 - y_2) (\partial_1\partial_2 - \partial_2^2)}
{(y_1 - y_2)^2} .
\]
We again let $y_1 \rightarrow y_2=y$.
The second derivative of the denominator with respect to  $y_1$ gives $2$.  Now
\begin{align*}
&\partial_1^2 \big( (\partial_1 - \partial_2) + (y_1 - y_2) (\partial_1\partial_2 - \partial_2^2)\big) \\
&\qquad 
= \partial_1 \big( (\partial_1^2 - \partial_1 \partial_2)  + 
(\partial_1\partial_2 - \partial_2^2) + (y_1-y_2)
(\partial_1^2\partial_2 - \partial_1\partial_2^2)
\big)\\
&\qquad =
\partial_1 \big((\partial_1^2 - \partial_2^2) + (y_1-y_2)
(\partial_1^2\partial_2 - \partial_1\partial_2^2)
\big)\\
&\qquad =
(\partial_1^3 - \partial_1\partial_2^2)  
+(\partial_1^2\partial_2 - \partial_1\partial_2^2)
+(y_1-y_2)
(\partial_1^3\partial_2 - \partial_1^2\partial_2^2). 
\end{align*}
Evaluating the right-hand side at $y=y_1=y_2$ and
noting that $\partial_1^2 \partial_2 F=\partial_1 \partial_2^2F$
at $(y,y)$, we just have $\partial_1^3 - \partial_1^2 \partial_2$.
Hence \eqref{eq:221} at $(y,y)$ reduces to 
\begin{align*}
&y \partial_1^2 \partial_2 + (c-1-y)\partial_1 \partial_2 + \frac{y}{4} (\partial_1^3 - \partial_1^2 \partial_2) 
+ \frac{1}{4}(2\partial_1^2 + 2\partial_1 \partial_2)
- a\partial_1\\
&\quad
=
\frac{y}{8} (2\partial_1^3 + 6 \partial_1^2 \partial_2) + (c-1-y)\partial_1 \partial_2
+ \frac{1}{4}(2\partial_1^2 + 2\partial_1 \partial_2)
- a\partial_1,
\end{align*}
where we used $\partial_1 F = \partial_2 F$ at $(y,y)$.
Comparing the right-hand side with 
\eqref{eq:fyy3a} and by \eqref{eq:fyy3}
we obtain
\begin{equation}
\label{eq:m2diag-ode}
\frac{y}{8}f'''(y)+(c-1-y) \big(\frac{3}{8} f''(y) + \frac{c-y}{4y}f'(y) - \frac{a}{2y}f(y)\big)
+ \frac{1}{4}f''(y) - \frac{a}{2} f'(y)=0.
\end{equation}
This equation can be written as
\[
f'''(y)=h_2(y) f''(y) + h_1(y) f'(y) + h_0(y) f(y),
\]
where 
\[
h_2(y)=-\frac{3(c-1-y)}{y} - \frac{2}{y}, \ 
h_1(y)= \frac{4a}{y} - \frac{2(c-y)(c-1-y)}{y^2}, \ 
h_0(y)=\frac{4a(c-1-y)}{y^2}
\]
are rational functions in $y$.
The coefficient matrix for the 
Pfaffian system for a one-dimensional ODE 
is simply the
companion matrix
\[
P=\begin{pmatrix}
0 & 1 & 0 \\
0 & 0 & 1 \\
h_0(y) & h_1(y) & h_2(y)
\end{pmatrix}.
\]

Note that the values of $f$, $f'$, $f''$ and $f'''$ at the origin are 
given by
\begin{gather*}
 f(0) = F(0, 0) = 1, \quad  
 f'(0) = 2 \partial_1 F(0,0) = \dfrac{2 a}{c}, \\
 f''(0) = 2 \partial_1^2 F(0,0)  + 2 \partial_1 \partial_2 F(0,0) 
              = \dfrac{8 (a)_2}{3 (c)_2} +  \dfrac{4 a(a-\frac{1}{2})}{3 c(c-\frac{1}{2})}, \\
f'''(0) = 2 \partial_1^3  F(0,0) + 6 \partial_1^2 \partial_2 F(0,0)
 = 2 \frac{(a)_3}{(c)_3} + 6 \Big(\frac{(a)_3}{5(c)_3} + \frac{4(a)_2 (a-\frac{1}{2})}{5(c)_2 (c-\frac{1}{2})}\Big).
\end{gather*}

As seen above, the computation using the l'H\^opital rule is already tedious for $m=2$.
Actually the computation can be automated by the restriction algorithm for holonomic 
ideals.  This will be explained in Section \ref{subsec:nk-restriction}.

\section{Properties of the Pfaffian system (integrable connection) for a general dimension}
\label{sec:general-dimension}

We now consider our problem for a general dimension.  We fully utilize  Gr\"obner basis theory for
the ring of differential operators.
In this section we only consider the non-diagonal region $\cal X$.
%
Let $K = {\mathbb C}(y_1,\dots,y_m)$ be the
field of rational functions in $y_1,\dots, y_m$ with
complex coefficients.  Further let
\[
R = K\langle \partial_1,\dots,\partial_m\rangle = {\mathbb C}(y_1,\dots,y_m)
\langle \partial_1 \dots,\partial_m\rangle 
\]
be the ring of differential
operators with rational function coefficients (see Appendix of \citet{hgd}).  
Let $I$ denote the left ideal of $R$ generated by $g_1, \dots, g_m$: 
\begin{equation}
\label{eq:ideal}
I=\langle g_1,\dots,g_m \rangle,
\end{equation}
where $g_i$ is given in \eqref{eqn:PDE1f1N}.  

We now prove the following lemma concerning the commutators of $g_1,\dots, g_m$.

\begin{lemma}  For $1\le i\neq j\le m$, 
\label{lem:commutator}
\begin{equation}
\label{eq:commutator}
[g_i, g_j] = -\frac{1}{2} \frac{y_i + y_j}{(y_i - y_j)^2} (g_i - g_j).
\end{equation}
\end{lemma}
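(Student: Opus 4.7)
The plan is to compute $[g_i, g_j]$ directly, organizing the terms so that the structure of the right-hand side becomes visible.

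Since the constant $-a$ in $g_i$ is central in $R$, it contributes nothing to the commutator, so proving the lemma is equivalent to proving the same relation with each $g_i$ replaced by $L_i := g_i + a$. Decompose $L_i = M_i + N_i$ into a ``local'' and an ``interaction'' part,
\[ M_i = y_i\partial_i^2 + (c-y_i)\partial_i, \qquad N_i = \frac{1}{2}\sum_{k\ne i}\frac{y_k}{y_i-y_k}(\partial_i - \partial_k). \]
Since $M_i$ involves only $y_i$ and $\partial_i$, it commutes with $M_j$ when $i\ne j$, so
\[ [L_i, L_j] = [M_i, N_j] + [N_i, M_j] + [N_i, N_j]. \]

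For $[M_i, N_j]$, the only summand in $N_j$ that fails to commute with $M_i$ on disjointness grounds is the $k=i$ term $\frac{y_i}{y_j-y_i}(\partial_j - \partial_i)$; thus this commutator reduces to one explicit expansion via the Leibniz rule. By $i\leftrightarrow j$ symmetry, $[N_i, M_j]$ reduces analogously. The double sum $[N_i, N_j]$ is the main piece: splitting it by the incidence pattern of $\{k,\ell\}$ (with $k\ne i$, $\ell\ne j$) against $\{i,j\}$, the generic case $\{k,\ell\}\cap\{i,j\}=\varnothing$ with $k\ne \ell$ contributes nothing (disjointness again), leaving only a short list of index configurations to handle.

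The main obstacle is the bookkeeping: I must show that after collecting all contributions, the ``spurious'' first-order terms $\partial_k$ with $k\notin\{i,j\}$ cancel, that the second-order terms $y_i\partial_i^2$ and $y_j\partial_j^2$ reassemble with coefficient $-\frac{1}{2}(y_i+y_j)/(y_i-y_j)^2$, and that the remaining first-order parts reconstitute $N_i - N_j$ with the same coefficient. The key algebraic tool is the partial-fraction identity
\[ \frac{1}{(y_i-y_k)(y_j-y_k)} = \frac{1}{y_i-y_j}\left(\frac{1}{y_j-y_k} - \frac{1}{y_i-y_k}\right), \]
which splits the cross-terms in $[N_i, N_j]$ into pieces individually recognizable as contributing to $N_i$ or to $N_j$. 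In combination with the derivative formulas $\partial_i\frac{y_j}{y_i-y_j} = -\frac{y_j}{(y_i-y_j)^2}$ and its $i\leftrightarrow j$ analogue, the entire identity reduces to a finite, routine symbolic manipulation, which yields exactly $-\frac{1}{2}\frac{y_i+y_j}{(y_i-y_j)^2}(g_i - g_j)$.
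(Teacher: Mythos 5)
Your strategy is sound and genuinely different from the paper's. The paper does \emph{not} compute $[g_i,g_j]$ from scratch: it writes $g_i=\tilde g_i+h_i$, where $\tilde g_i$ is the full two\nobreakdash-variable operator (local part \emph{plus} the single interaction term in $y_i,y_j$) and $h_i=\frac12\sum_{k\ge 3}\frac{y_k}{y_i-y_k}(\partial_i-\partial_k)$, takes the base identity $[\tilde g_1,\tilde g_2]=-\frac12\frac{y_1+y_2}{(y_1-y_2)^2}(\tilde g_1-\tilde g_2)$ as verified by a computer-algebra computation in {\tt Risa/Asir}, and then only tracks the extra commutators $[h_1,\tilde g_2]+[\tilde g_1,h_2]+[h_1,h_2]$, matching coefficients of $(\partial_1-\partial_k)$ and $(\partial_2-\partial_k)$ for $k\ge 3$. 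Your decomposition $L_i=M_i+N_i$ (purely local part versus the entire interaction sum) is cleaner in that it needs no machine-verified base case and treats all indices uniformly; your disjointness observations ($[M_i,M_j]=0$, only the $k=i$ term of $N_j$ interacts with $M_i$, the generic term of $[N_i,N_j]$ vanishes) are all correct, and the partial-fraction identity you quote is exactly the right tool for the surviving cross-terms.

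The gap is that the decisive step is asserted rather than performed. Everything you have actually established is structural; the content of the lemma lies in the cancellations you list under ``the main obstacle is the bookkeeping'': that the $\partial_k$ terms with $k\notin\{i,j\}$ cancel, and that the second- and first-order pieces reassemble with the very specific coefficient $-\frac12\frac{y_i+y_j}{(y_i-y_j)^2}$. None of this is exhibited. This matters here more than usual: the paper explicitly remarks that the computation is ``tedious even for $m=2$'' and that Ibukiyama--Kuzumaki--Ochiai's claim of a ``straightforward computation'' for the analogous $\hyperF{2}{1}$ identity is not straightforward at all --- which is precisely why the authors delegated the two-variable core to software. To complete your argument you must either write out $[M_i,N_j]+[N_i,M_j]$ together with the $k=j$, $\ell=i$, and $k=\ell\notin\{i,j\}$ configurations of $[N_i,N_j]$ and display the matching of coefficients, or at minimum verify the residual two-variable identity by an independent check as the paper does. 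As it stands, the proposal is a correct plan, not yet a proof.
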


A similar result for $\hyperF{2}{1}$ is given in Lemma 9.9 of
\citet{ibukiyama-kuzumaki-ochiai}.  Although they claim that their
Lemma 9.9 follows from a straightforward computation, in fact the computation
for checking \eqref{eq:commutator} is tedious even for $m=2$.
However for $m=2$, 
\eqref{eq:commutator} can be verified by some 
software systems (e.g., \citet{asir}), 
which can handle rings of differential operators.
The following program in {\tt Risa/Asir} 
\begin{center}
\begin{boxedminipage}[ht]{10.5cm}{\scriptsize
\begin{verbatim}
import("names.rr"); import("yang.rr");
yang.define_ring(["partial",[y1,y2]]);
G1=y1*dy1^2+(c-y1)*dy1+(1/2)*(y2/(y1-y2))*(dy1-dy2)-a;
G2=base_replace(G1,[[y1,y2],[y2,y1],[dy1,dy2],[dy2,dy1]]);
G=yang.mul(G1,G2)-yang.mul(G2,G1)+(1/2)*(y1+y2)/(y1-y2)^2*(G1-G2);
printf("G=%a\n",G);
\end{verbatim}
}
\end{boxedminipage}
\end{center}
outputs the result {\tt G=0}.
Therefore in the following proof, assuming that
\eqref{eq:commutator}  holds for $m=2$, we show that it holds for $m>2$.

\begin{proof}
By symmetry we only need to prove the case $i=1, j=2$.
Define $\tilde g_1, \tilde g_2$ 
\begin{align*}
&\tilde g_1 = y_1 \partial_1^2 + (c-y_1)\partial_1 + 
\frac{1}{2}\frac{y_2}{y_1-y_2} (\partial_1 - \partial_2) - a,\\
&\tilde g_2 = y_2 \partial_2^2 + (c-y_2)\partial_2 + 
\frac{1}{2}\frac{y_1}{y_2-y_1} (\partial_2 - \partial_1) - a.
\end{align*}
Then
\[
g_i = \tilde g_i + h_i, \quad h_i=\frac{1}{2}\sum_{k=3}^m  \frac{y_k}{y_i-y_k} (\partial_i - \partial_k),
\quad i=1,2.
\]
We already know
\[
[\tilde g_1, \tilde g_2] =  -\frac{1}{2} \frac{y_1 + y_2}{(y_1 - y_2)^2} (\tilde g_1 - \tilde g_2).
\]
Then 
\[
[g_1, g_2]=[\tilde g_1 + h_1, \tilde g_2 + h_2]=[\tilde g_1, \tilde g_2]
+[h_1, \tilde g_2] + [\tilde g_1, h_2] + [h_1, h_2].
\]
Therefore it suffices to show
\[
[h_1, \tilde g_2] + [\tilde g_1, h_2] + [h_1, h_2] = 
-\frac{1}{2} \frac{y_1 + y_2}{(y_1 - y_2)^2} (h_1 - h_2).
\]

In considering commutators, we only need to look at terms, where a
differential operator actually differentiate rational functions in $y_1,\dots,y_m$.
For example consider  $h_1 \tilde g_2$ in $[h_1, \tilde g_2]$.  In $h_1 \tilde g_2$ the
only relevant term is 
$\partial_1$ in $h_1$ differentiating $y_1/(y_1-y_2)$ in $\tilde g_2$.  Noting
\[
\partial_1 \frac{y_1}{y_2 - y_1} = \partial_1 \big( \frac{y_2}{y_2-y_1} -1 \big)
= \frac{y_2}{(y_2 - y_1)^2},
\]
in $h_1 \tilde g_2$ the relevant terms are
\[
\frac{1}{4} \frac{y_2}{(y_2 - y_1)^2}\sum_{k=3}^m \frac{y_k}{y_1 - y_k} 
(\partial_2 - \partial_1) 
= 
\frac{1}{4} \frac{y_2}{(y_2 - y_1)^2}\sum_{k=3}^m \frac{y_k}{y_1 - y_k} 
\big((\partial_2-\partial_k)  - (\partial_1-\partial_k) \big) .
\]
In $\tilde g_2 h_1$ we need to look at $\partial_1$ in $\tilde g_2$ differentiating 
$y_k/(y_1-y_k)$. Hence we have
\[
\frac{1}{4}\frac{y_1}{y_2 - y_1}\sum_{k=3}^m \frac{y_k}{(y_1 - y_k)^2}(\partial_1 - \partial_k).
\]

Similarly in $[\tilde g_1, h_2]=-[h_2, \tilde g_1]$ the relevant terms are
\[
-\frac{1}{4} \frac{y_1}{(y_1 - y_2)^2}\sum_{k=3}^m \frac{y_k}{y_2 - y_k} 
\big( (\partial_1 - \partial_k) - (\partial_2 - \partial_k)\big)
+\frac{1}{4}\frac{y_2}{y_1 - y_2}\sum_{k=3}^m \frac{y_k}{(y_2 - y_k)^2}(\partial_2 - \partial_k).
\]

Finally in $[h_1,h_2]$ we look at $\partial_k$ differentiating $y_k/(y_i- y_k)$.  Then
the relevant terms are
\[
-\frac{1}{4} \sum_{k=3}^m \frac{y_k}{y_1 - y_k}\frac{y_2}{(y_2 - y_k)^2}(\partial_2 - \partial_k)
+ 
\frac{1}{4} \sum_{k=3}^m \frac{y_k}{y_2 - y_k}\frac{y_1}{(y_1 - y_k)^2}(\partial_1 - \partial_k).
\]
Then the coefficient for $-(\partial_1 - \partial_k)/4$ is
\begin{align*}
&\frac{y_2}{(y_2-y_1)^2} \frac{y_k}{y_1 -y_k}
+\frac{y_1}{y_2 - y_1} \frac{y_k}{(y_1 - y_k)^2}
+\frac{y_1}{(y_1 -y_2)^2} \frac{y_k}{y_2 - y_k}
- \frac{y_k}{y_2 - y_k}\frac{y_1}{(y_1 - y_k)^2}\\
&\quad = \frac{y_1 + y_2}{(y_2 - y_1)^2} \frac{y_k}{y_1-y_k},
\end{align*}
which coincides with the coefficient of $-(\partial_1  - \partial_k)/4$ in
\[
-\frac{1}{2} \frac{y_1 + y_2}{(y_1 - y_2)^2} h_1 .
\]
Similarly the coefficients of $(\partial_2 - \partial_k)$ coincide on both sides.
\end{proof}

We now consider the graded lexicographic term order $\succ$.
The initial term of $g_i$ (without the coefficient $y_i$) is given as
\[
\interm g
_i = \partial_i^2.
\]

We now prove the following theorem.

\begin{theorem} \label{th:gbasis}
For the term order $\succ$, 
$\{g_1,\dots, g_m\}$ is a Gr\"obner basis of $I$ in $R$ and 
the initial ideal is given 
by $\langle \partial_1^2, \dots,\partial_m^2\rangle$.
$I$ is zero-dimensional and the set of standard monomials is given by
the set of square-free mixed derivatives
\[
\{\partial_{i_1} \partial_{i_2}\dots \partial_{i_k} \mid \ 1\le i_1 < \dots < i_k \le m, \ k\le m \},
\]
which has the cardinality $2^m$.
\end{theorem}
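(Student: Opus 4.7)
The plan is to verify Buchberger's criterion directly: for each pair $i\neq j$ I will exhibit a standard representation of the S-polynomial $S(g_i,g_j)$ as a left $R$-combination of the $g_k$'s whose summands all have leading $\partial$-monomial strictly below $\mathrm{lcm}(\interm g_i,\interm g_j) = \partial_i^2\partial_j^2$. Lemma~\ref{lem:commutator} is the key input.

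Since $\interm g_i = y_i\partial_i^2$ and $y_i$ is a unit in $K$, up to a unit the S-polynomial equals $y_j\partial_j^2 g_i - y_i\partial_i^2 g_j$. Writing $g_k = y_k\partial_k^2 + r_k$, where
\[
r_k := (c-y_k)\partial_k + \tfrac12\sum_{l\neq k}\tfrac{y_l}{y_k-y_l}(\partial_k-\partial_l) - a
\]
is a first-order operator, I substitute $y_k\partial_k^2 = g_k - r_k$ to obtain
\[
y_iy_j\,S(g_i,g_j) = (g_j-r_j)g_i - (g_i-r_i)g_j = -[g_i,g_j] + r_i g_j - r_j g_i.
\]
Applying Lemma~\ref{lem:commutator} to rewrite the commutator then yields
\[
y_iy_j\,S(g_i,g_j) = \frac{y_i+y_j}{2(y_i-y_j)^2}(g_i-g_j) + r_i g_j - r_j g_i,
\]
a left $R$-combination of $g_i$ and $g_j$ alone in which each multiplier has differential order at most $1$, so every summand has $\partial$-order at most $3$, strictly less than the order $4$ of $\partial_i^2\partial_j^2$. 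This is a standard representation in the graded lexicographic order, so Buchberger's criterion holds and $\{g_1,\dots,g_m\}$ is a Gr\"obner basis of $I$.

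Once the Gr\"obner basis claim is settled, the remaining assertions follow at once. The initial ideal is $\langle y_i\partial_i^2 : i=1,\dots,m\rangle = \langle\partial_1^2,\dots,\partial_m^2\rangle$, because the $y_i$ are units in $K$. A monomial $\partial^\alpha = \partial_1^{\alpha_1}\cdots\partial_m^{\alpha_m}$ lies outside this monomial ideal iff every $\alpha_i \le 1$, i.e., iff $\partial^\alpha = \prod_{k\in S}\partial_k$ for some $S\subseteq\{1,\dots,m\}$; there are exactly $2^m$ such subsets. Finiteness of the standard monomial set gives $\dim_K R/I = 2^m < \infty$, showing $I$ is zero-dimensional. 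The main obstacle in this plan is the commutator identity itself, and that is precisely what Lemma~\ref{lem:commutator} provides; granted that, the Gr\"obner basis computation collapses to a degree count.
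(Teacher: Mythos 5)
Your proposal is correct and takes essentially the same route as the paper: Lemma~\ref{lem:commutator} together with Buchberger's criterion for $R$ yields the Gr\"obner basis and the initial ideal $\langle \partial_1^2,\dots,\partial_m^2\rangle$, after which the standard monomials are read off as the square-free ones. The only difference is that you spell out the S-pair reduction $y_iy_j\,S(g_i,g_j)=-[g_i,g_j]+r_ig_j-r_jg_i$ and the resulting lcm-representation explicitly, whereas the paper leaves this step implicit in its citation of Buchberger's criterion (Theorem 1.1.10 of Saito--Sturmfels--Takayama).
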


\begin{proof}
By Lemma \ref{lem:commutator} and the 
Buchberger's criterion for the ring $R$ 
(cf.\ Theorem 1.1.10 of \citet{saito-sturmfels-takayama}), 
$g_i, i=1,\dots, m,$ form a Gr\"obner basis and 
the initial ideal is given by $\langle \partial_1^2, \dots,\partial_m^2\rangle$.
Let $J=\langle \partial_1,\dots,\partial_m \rangle$. Then
$J^{m+1}\subset 
\langle \partial_1^2, \dots,\partial_m^2 \rangle$.
Hence $I$ is a zero-dimensional ideal.
Furthermore this shows that 
the set of standard monomials is given by the set of
square-free mixed derivatives.
\end{proof}

It follows from  Theorem \ref{th:gbasis} that
there exists a Pfaffian system 
and $2^m \times 2^m$ matrices 
(as $P_i(Y)$ for $m=2$ in the expository section \ref{sec:dim2})
are obtained by the normal form algorithm
in the ring of differential operators $R$.
The matrices are used to numerically solve the associated  ODE.
However, the derivation of the matrices on computer is heavy 
and the obtained matrices are not in a relevant form 
for an efficient numerical evaluation.
Then, we do it {\it by hand} in the sequel.

Consider a higher order derivative
$\partial_1^{n_1} \dots \partial_m^{n_m} F$ 
of $F=\hyperF{1}{1}(a;c;y_1, \dots, y_m)$.
If total degree of differentiation $n=n_1 + \dots + n_m$ is greater than or
equal to $m+1$, then for some $i$ we have $n_i \ge 2$. Then as in the
previous section we can use $g_iF=0$ to decrease the
total degree of differentiation.  Therefore as in 
\eqref{eq:n1n2base}, for each $n_1,\dots,n_m$, there exist $2^m$ 
rational functions 
$h^{(n_1,\dots, n_m)}_{i_1, \dots,i_m}$, $i_j = 0,1$, $j=1,\dots,m,$ such that
\begin{equation}
\label{eq:n1nmbase}
\partial_1^{n_1}\dots \partial_m^{n_m}F = 
\sum_{i_1=0}^1 \dots \sum_{i_m=0}^1 h^{(n_1, \dots, n_m)}_{i_1,\dots, i_m} \partial_1^{i_1} \dots \partial_m^{i_m} F.
\end{equation}

In the holonomic gradient method,  as in the case of $m=2$ in \eqref{eq:euler-approx},
we only need $h^{(n_1,\dots, n_m)}_{i_1, \dots,i_m}$ where
$0\le n_1, \dots, n_m\le 2$ and at most one of $n_1,\dots, n_m$  is two, such as
$h^{(2,1,\dots,1,0, \dots, 0)}_{i_1, \dots,i_m}$.
Define a $2^m$-dimensional vector of square-free  mixed derivatives of $F$ by
$\vec F = (F, \partial_1 F, \partial_2 F, \partial_1 \partial_2 F,  \dots, 
\partial_1\dots \partial_m F)^t$.  
In $\vec F$ the 
elements are lexicographically  ordered, for convenience in programming.
$\partial_i \vec F$ is written as 
\[
\partial_i \vec F = P_i(y) \vec F, \qquad i=1,\dots, m,
\]
where $P_i(y)$, $i=1,\dots, m$, in the Pfaffian system 
are $2^m \times 2^m$ matrices
consisting of  $h^{(n_1,\dots, n_m)}_{i_1, \dots,i_m}$'s.

We now study the form of $h^{(n_1,\dots, n_m)}_{i_1, \dots,i_m}$, 
where $0\le n_1, \dots, n_m\le 2$ and at most one of $n_1,\dots, n_m$  is two.
Denote $[m]=\{1,\dots,m\}$.
For a subset $J\subset [m]$ denote
\[
\partial_J = \prod_{j\in J} \partial_j .
\]
Choose $i\in [m]$ and $J\subset [m]$ such that $i\not\in J$. Write $I=J\cup \{i\}$.
$\partial_J g_i F=\partial_J 0 = 0$, where $g_i$ is in \eqref{eqn:PDE1f1N}.  
Since $i\not \in J$, we can write $\partial_J g_i$ as
\[
y_i \partial_i^2 \partial_J + (c-y_i) \partial_I
+\frac{1}{2} \sum_{k\neq i} \partial_J \big(
\frac{y_k}{y_i - y_k} (\partial_i - \partial_k)\big) - a \partial_J.
\]
For $k\not \in J$ 
\[
\partial_J \big(
\frac{y_k}{y_i - y_k} (\partial_i - \partial_k)\big) = 
\frac{y_k}{y_i - y_k} (\partial_I - \partial_{J\cup \{k\}}).
\]
On the other hand for $k\in J$, by \eqref{eq:y1y2}
\[
\partial_J \big(
\frac{y_k}{y_i - y_k} (\partial_i - \partial_k)\big) = 
\frac{y_k}{y_i - y_k} (\partial_I - \partial_J \partial_k)
+ \frac{y_i}{(y_i - y_k)^2} (\partial_{\{i\}\cup J\setminus\{k\}} - \partial_J).
\]
Here $\partial_J \partial_k$ is not square-free and in fact
\[
\partial_J \partial_k = \partial_k^2 \partial_{J\setminus \{k\}},
\]
which causes recursive application of \eqref{eqn:PDE1f1N}.  In $\partial_J g_i$ we
now separate square-free terms and define
\begin{align*}
 r(i,J; y)&= - \Big[
(c- y_i) \partial_I - a \partial_J + 
\frac{1}{2} \sum_{k\not\in I} \frac{y_k}{y_i - y_k} (\partial_I
- \partial_J\partial_k)\\
& \qquad + \frac{1}{2} \sum_{k\in J} \frac{y_k}{y_i - y_k} \partial_I
+ \frac{1}{2} \sum_{k\in J} \frac{y_i}{(y_i - y_k)^2} (\partial_i \partial_{J\setminus \{k\}}
- \partial_J)\Big],
\end{align*}
where for $J=\emptyset$, reflecting the original $g_i$, 
we define
\[
r(i,\emptyset; y)=-\Big[
(c- y_i) \partial_i - a + \frac{1}{2} \sum_{k\neq i} \frac{y_k}{y_i - y_k} (\partial_i
- \partial_k)\Big].
\]
Then $\partial_i^2 \partial_JF$ is expanded as 
\begin{equation}
y_i \partial_i^2 \partial_J F
=  r(i,J;y)F  + \frac{1}{2} \sum_{k\in J} \frac{1}{y_i - y_k}  (y_k \partial_k^2 \partial_{J\setminus \{k\}})F.
\label{eq:recursion2}
\end{equation}
The use of this recursive expression yields an efficient numerical evaluation
of the matrices of the Pfaffian system.
We keep numerical values of
$\partial_k^2 \partial_{J\setminus \{k\}}F$ in a table and
use them to evaluate 
$\partial_i^2 \partial_J F$ and keep it in the table, again.

We can also apply the recursion to the last term on the right-hand side.  The resulting
expression for $y_i \partial_i^2 \partial_J F$ is given as
\begin{align}
y_i \partial_i^2 \partial_J F &=  r(i,J;y) F
+ \frac{1}{2} \sum_{k_1\in J} \frac{1}{y_i - y_{k_1}} r(k_1,J\setminus\{k\};y) F
\nonumber \\& \quad
+ \frac{1}{4} \sum_{k_1, k_2\in J\atop k_1, k_2: \text{distinct}} 
\frac{1}{(y_i - y_{k_1})(y_{k_1}-y_{k_2})} r(k_2,J\setminus\{k_1, k_2\};y) F
\nonumber \\& \quad
+ \frac{1}{8} \sum_{k_1, k_2,k_3\in J\atop k_1, k_2, k_3: \text{distinct}} 
\frac{1}{(y_i - y_{k_1})(y_{k_1}-y_{k_2})(y_{k_2}-y_{k_3})} 
r(k_3,J\setminus\{k_1, k_2,k_3\};y)F + \dots
\nonumber \\ &\quad 
+ \frac{1}{2^{|J|}}\sum_{k_1, \dots, k_{|J|}\in J\atop k_1, \dots, k_{|J|}: \text{distinct}} 
\frac{1}{(y_i - y_{k_1})(y_{k_1}-y_{k_2})\dots (y_{k_{|J|-1}}-y_{k_{|J|}})} r(k_{|J|},\emptyset;y) F .
\label{eq:expanded}
\end{align}

Now in \eqref{eq:2-1} we write $\Sigma^{-1}/2= \beta=(\beta_1, \dots, \beta_m)$, where
$\beta_1,\dots, \beta_m$ are distinct, 
and define a $2^m$-dimensional vector valued function $\vec G$ in a scalar $x$ by
\[
\vec G(x)=\exp( -x\sum_{i=1}^m \beta_i) x^{mn/2} \vec F(\beta x).
\]
Then $\vec G$ satisfies the ODE
\begin{equation} \label{eq:odeG}
\frac{d\vec G}{dx}= \left( - (\sum_{i=1}^m \beta_i) I_{2^m} + \frac{mn}{2x} I_{2^m}
 + \sum_{i=1}^m P_i(\beta x) \beta_i\right) \vec G,
\end{equation}
where $I_{2^m}$ is the $2^m \times 2^m$ identity matrix. 
We denote the right-hand side as $P_\beta \vec G$.  We now prove the following theorem,
which is important for guaranteeing stability of ODE at $x=+\infty$.

\begin{theorem}
\label{th:takayama1}
As $x\rightarrow\infty$
\[
P_\beta=A_0 + O(1/x),
\]
where $A_0$ only depends on $\beta$ and the $2^m$ eigenvalues of $A_0$ are given
as $-e_1 \beta_1 - \dots - e_m \beta_m$, where $(e_1,\dots, e_m)\in \{0,1\}^m$.
\end{theorem}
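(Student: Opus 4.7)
The plan is to determine the $x\to\infty$ limit of each coefficient matrix $P_i(\beta x)$ entry by entry via the recursion \eqref{eq:expanded}, assemble the limit matrix $A_0$, and read off its eigenvalues by observing that $A_0$ is upper triangular when the basis $\vec F$ is ordered by the subset lattice.

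Fix $i\in[m]$ and consider the row of $P_i(y)$ indexed by $\partial_J F$. If $i\notin J$, then $\partial_i\partial_J F=\partial_{J\cup\{i\}}F$ is already square-free, so this row has a single nonzero entry equal to $1$ in column $J\cup\{i\}$, independently of $y$. If $i\in J$, let $J_0=J\setminus\{i\}$ so that $\partial_i\partial_J F=\partial_i^2\partial_{J_0}F$, and apply \eqref{eq:recursion2} iterated into \eqref{eq:expanded}. The only $O(1)$ contribution at $y=\beta x$ comes from the leading piece of $r(i,J_0;y)/y_i$, namely $\tfrac{y_i-c}{y_i}\partial_{J_0\cup\{i\}}F=(1+O(1/x))\,\partial_J F$; every other summand of $r(i,J_0;y)/y_i$ carries an extra factor $1/y_i$ or $1/(y_i-y_k)^2$, and each nested term in \eqref{eq:expanded} of depth $s\ge1$ has a prefactor of size $O(x^{-s-1})$ at $y=\beta x$ multiplying an $r$-expression of size $O(x)$, for a net $O(x^{-s})$ contribution. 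Hence $(P_i(\beta x))_{J,J}=1+O(1/x)$ and all other entries of this row are $O(1/x)$.

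Since the term $\tfrac{mn}{2x}I_{2^m}$ in \eqref{eq:odeG} is also $O(1/x)$, I obtain $P_\beta=A_0+O(1/x)$ with
$$A_0=-\Bigl(\sum_{i=1}^m\beta_i\Bigr)I_{2^m}+\sum_{i=1}^m\beta_i\,P_i^{(0)},$$
where $P_i^{(0)}$ has a single $1$ at $(J,J\cup\{i\})$ if $i\notin J$ and a single $1$ at $(J,J)$ if $i\in J$. A direct summation yields
$$(A_0)_{J,J}=-\sum_{i\notin J}\beta_i,\qquad (A_0)_{J,J\cup\{i\}}=\beta_i\ \text{for }i\notin J,$$
with all other entries zero; in particular $A_0$ depends only on $\beta$.

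Order the $2^m$ subsets of $[m]$ by any linear extension of set inclusion (for instance first by cardinality, breaking ties arbitrarily). Since every nonzero off-diagonal entry of $A_0$ has the form $(J,J\cup\{i\})$ with $J\subsetneq J\cup\{i\}$, the matrix $A_0$ is upper triangular in this ordering, and its spectrum is the multiset of diagonal entries $\{-\sum_{i\notin J}\beta_i:J\subset[m]\}$; setting $e_i=1$ if $i\notin J$ and $e_i=0$ otherwise, this is exactly $\{-\sum_{i=1}^m e_i\beta_i:(e_1,\ldots,e_m)\in\{0,1\}^m\}$, as claimed. The main obstacle is the first step, where one must rule out any hidden $O(1)$ contribution from the iterated recursion. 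The key observation is that the only pairing of $y$-factors in $r(i,J_0;\beta x)/y_i$ which is of order $1$ is $y_i/y_i$, producing the diagonal entry $\partial_J F$; every other pairing involves at least one denominator factor $(y_i-y_k)$ or $(y_{k_{j-1}}-y_{k_j})$ whose $x$-scaling is not absorbed by the numerator, hence is strictly subleading.
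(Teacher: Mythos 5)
Your proposal is correct and follows essentially the same route as the paper: both arguments rest on the observation that after dividing \eqref{eq:expanded} by $y_i=\beta_i x$ the only $O(1)$ contribution is the $\partial_I$ term coming from $(c-y_i)\partial_I$ in $r(i,J;y)$, which yields exactly the upper-triangular $A_0$ with diagonal entries $-\sum_{i\notin I}\beta_i$ and off-diagonal entries $\beta_i$ at $(I,I\cup\{i\})$. The only cosmetic difference is that you assemble $A_0$ from the limits $P_i^{(0)}$ of the individual coefficient matrices, whereas the paper computes $\frac{d}{dx}\partial_I F(\beta x)$ directly; the content is the same.
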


\begin{proof}
Note that $y_1=\beta_1 x,\dots, y_m=\beta_m x = O(x)$.
Divide \eqref{eq:expanded} by $y_i = \beta_i x$.  Then on the right-hand
side of \eqref{eq:expanded}, the only constant order term is $\partial_I$ in $r(i,J;y)$.
Now 
\begin{align*}
\frac{d}{dx} \partial_I F(\beta x)&= \sum_{i=1}^m \beta_i \partial_i \partial_I F(\beta x)\\
&=\sum_{i\in I} \beta_i \partial_i^2 \partial_{I\setminus \{i\}} F(\beta x)   
+ \sum_{i\not \in I} \beta_i \partial_{I\cup \{i\}} F(\beta x)\\
&= \big(\sum_{i\in I} \beta_i\big)  \partial_I F(\beta x)  + O(1/x)
 + \sum_{i\not \in I} \beta_i \partial_{I\cup \{i\}} F(\beta x).
\end{align*}
This implies that the $I$-th diagonal element of 
$A_0$ is given 
\[
- \sum_{i=1}^m \beta_i + \sum_{i\in I} \beta_i
= - \sum_{i\not\in I} \beta_i.
\]
Furthermore the $(I, I\cup \{i\})$-element of $A_0$ is $\beta_i$.
Other elements of $A_0$ are zeros.  Hence $A_0$ is an upper triangular
matrix with diagonal elements $-\sum_{i\not\in I} \beta_i$, $I\subset[m]$.
The theorem holds because the  diagonal elements of an upper triangular
matrix are its eigenvalues.
\end{proof}


\section{Some results of symbolic computation}
\label{sec:symbolic-computation}

In this section we present some results on symbolic computation for
the initial values (cf.\ \eqref{eq:initial-others})
and the restriction for diagonal regions (cf.\ Section \ref{subsec:m2-diagonal}).
We omit writing down the fully expanded form of 
\eqref{eq:expanded}, since the recursive  formula
\eqref{eq:recursion2} can be directly used in our implementation of holonomic gradient method.


\subsection{Initial values}
\label{subsec:initial-values}
Initial values for our holonomic gradient method can be obtained by
expressing $\hyperF{1}{1}$ in terms of monomial symmetric polynomials as in
\eqref{eq:expand-to-monomials}.  We denote the relation between the
zonal polynomials and the monomial symmetric polynomials in \eqref{eq:zonal-monomial}
as 
\[
{\cal C}_\kappa(Y) = \sum_{\lambda \dominatedby \kappa} c_{\kappa,\lambda} {\cal M}_{\lambda}(Y), 
\]
where $\lambda \dominatedby \kappa$ means that $\lambda$ is dominated by $\kappa$, i.e.\ 
$\sum_{i=1}^{s} \lambda_i \leq \sum_{i=1}^s \kappa_i$
for all $s$.  
Then 
$\hyperF{1}{1}$ is expressed as
\[
\hyperF{1}{1}(a;c;Y)=\sum_{k=0}^\infty \sum_{\lambda \partitionof k}  q_\lambda(a,c) {\cal M}_\lambda(Y),
\quad
q_{\lambda}(a,c)=
\sum_{\kappa \partitionof k, \kappa \dominates \lambda}
\frac{(a)_{\kappa} c_{\kappa,\lambda}}{(c)_{\kappa}k!}.     
\]

A recurrence relation for $c_{\kappa,\lambda}$'s is given by
\citet{james-1968} (see also (14) in Section 7.2.1 of \citet{muirhead-book} and
Section 4.5.4 of \citet{takemura-zonal}), which can be used to compute $q_\kappa(a,c)$.
However James' recurrence relation works for each ${\cal C}_\kappa$ separately.
Recently \citet{koev-edelman} gave a much improved algorithm based on
recursive relations among the values of zonal polynomials for 
$m$ variables and $m-1$ variables.
For our implementation
of holonomic gradient method, we adapted
Koev-Edelman's recurrence relation also for derivatives of $\hyperF{1}{1}$ to
evaluate the initial values.

Close to the origin, we can use rough initial values given by the linear approximation as in 
\eqref{eq:linear-initial}.
Then we only need $\kappa=(k_1,\dots,k_l)$ such that $k_1=\dots=k_l=1$ or
$k_1=2$, $k_1=\dots=k_l=1$.  Some  $q_\lambda(a,c)$'s for small $\lambda$'s are as follows.
\begin{align*}
&q_{\emptypartition}=1, \ \ q_{(1)} = \frac{a}{c}, \ \  q_{(2)}=\frac{(a)_2}{2 (c)_2},\ \ 
q_{(1,1)}= \frac{(a)_2}{3 (c)_2} + \frac{2(a)_{(1,1)}}{3 (c)_{(1,1)}}, \ \ 
q_{(2,1)}=\frac{(a)_3}{10 (c)_3} + \frac{2 (a)_{(2,1)}}{ 5 (c)_{(2,1)}},\\
& q_{(1,1,1)}=\frac{(a)_3}{15(c)_3} + \frac{3(a)_{(2,1)}}{5(c)_{(2,1)}}
  + \frac{(a)_{(1,1,1)}}{3(c)_{(1,1,1)}},\  
q_{(2,1,1)}=\frac{(a)_4}{70(c)_4} + \frac{4(a)_{(2,2)}}{45(c)_{(2,2)}} + \frac{11(a)_{(3,1)}}{63(c)_{(3,1)}}
+ \frac{2(a)_{(2,1,1)}}{9(c)_{(2,1,1)}},
\end{align*}
where $\emptypartition$ stands for the unique partition of zero. 
Write $(1^k)=(1,\dots,1), (2,1^{k-2})=(2,1,\dots,1)$, which are partitions of $k$. 
Given the above quantities,
the linear approximation of $\partial_1 \dots\partial_l F(Y)$, $0\le  l \le m$,  
for  $Y=(y_1,\dots,y_m)$ close to the origin is expressed as 
\begin{equation}
\label{eq:linear-initial-m}
\partial_1\dots \partial_l F(Y) \doteq  q_{(1^l)}(a,c) + 2q_{(2,1^{l-1})} (a,c) (y_1 + \dots + y_l) + 
q_{(1^{l+1})}(a,c) (y_{l+1} + \dots + y_m), 
\end{equation}
where for $l=0$ the second term on the right-hand side is zero and
for $l=m$ the third term is zero.
We found that initial values by \eqref{eq:linear-initial-m} are 
practical enough for $m\le 5$.

In fact, by Lemma 1 in Section 4.5.2 of \cite{takemura-zonal}
and by Proposition 7.3 of \cite{stanley-1989am},
$q_{(1^k)}(a,c)$ and $q_{(2,1^{k-2})}(a,c)$ are explicitly written 
as follows:
\begin{align*}
q_{(1^k)}(a,c)&=2^k k! \sum_{\kappa \partitionof k} \frac{\prod_{1\le i<j\le l(\kappa)} (2k_i - 2k_j - i +j)}
{\prod_{i=1}^{l(\kappa)} (2k_i + l(\kappa)-i)!} \frac{(a)_\kappa}{(c)_\kappa}, \\
q_{(2,1^{k-2})}(a,c)&=2^k (k-2)!\sum_{\kappa \partitionof k} \frac{\prod_{1\le i<j\le l(\kappa)} (2k_i - 2k_j - i +j)}
{\prod_{i=1}^{l(\kappa)} (2k_i + l(\kappa)-i)!} 
\big(\binom{k}{2} + \sum_{i=1}^{l(\kappa)} k_i (k_i - i)\big)
\frac{(a)_\kappa}{(c)_\kappa},
\end{align*}
where $l(\kappa)$ is the length (number of non-zero parts) of $\kappa=(k_1,\dots,k_{l(\kappa)})$.

For larger values of $m$ we need higher order terms for initial values.
For two partitions $\mu, \lambda$, we write $\mu \subset \lambda$ to denote 
$\mu_i \le  \lambda_i$ for all $i$.
For two partitions $\kappa, \nu$, 
we denote
by $\kappa\concat\nu$ 
the concatenation of $\kappa$ and $\nu$ obtained 
from $(\kappa_1,\nu_1,\kappa_2,\nu_2,\ldots)$
by sorting.
Consider a rectangular partition $\tau=(t,\dots,t)=(t^l) \partitionof tl$.
For $\tau=(t^l)$ and $\lambda  \supset \tau$
we define
\[
I(\lambda,\tau) = \{(\kappa,\nu) \mid
\kappa\concat\nu= \lambda,\ 
\tau \subset \kappa, \ \kappa_{l+1}=0\}.
\]
Consider $\der^\mu {\cal M}_\lambda(Y)=\der_1^{\mu_1}\der_2^{\mu_2}\cdots \der_m^{\mu_m}
{\cal M}_\lambda(Y)$.
Note that $\der^\mu {\cal M}_\lambda(Y)=0$, if
$\mu \not\subset \lambda$. 
For a rectangular  $\tau=(t^l)$
we can calculate $\der^\tau {\cal M}_\lambda(Y)$
by the following lemma.
\begin{lemma}
\label{lemma:monomial-d}
For $\tau=(t^l) \subset \lambda$
\begin{align*}
\der^\tau  {\cal M}_\lambda(Y)
=\sum_{ (\kappa,\nu) \in I(\lambda,\tau)}
\frac{\kappa!}{(\kappa-\tau)!}
{\cal M}_{\kappa-\tau}(y_1,\dots,y_l)
{\cal M}_{\nu}(y_{l+1},\ldots,y_m),
\end{align*}
where 
$\gamma!=\prod_i (\gamma_i!)$ for a partition $\gamma$, and
$\kappa-\tau=(\kappa_1-t,\ldots,\kappa_l-t)$.
\end{lemma}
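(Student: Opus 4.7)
The plan is to expand ${\cal M}_\lambda(Y)$ as a sum of ordinary monomials, apply $\partial^\tau$ termwise, and then regroup the surviving monomials. Write
\[
{\cal M}_\lambda(Y)=\sum_\alpha y^\alpha,
\]
where $\alpha$ ranges over the distinct $m$-tuples obtained by permuting the parts of $\lambda$ (padded with zeros to length $m$). Since $\partial^\tau=\partial_1^t\cdots\partial_l^t$, the monomial $y^\alpha$ survives precisely when $\alpha_i\ge t$ for $i=1,\dots,l$, and in that case it acquires the scalar factor $\prod_{i=1}^l \alpha_i!/(\alpha_i-t)!$ together with a shift $y_i\mapsto y_i^{\alpha_i-t}$ for $i\le l$.

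Next I would group the surviving $\alpha$'s by the pair $(\kappa,\nu)$ obtained by sorting $(\alpha_1,\dots,\alpha_l)$ and $(\alpha_{l+1},\dots,\alpha_m)$ respectively into decreasing order. By construction $\kappa\concat\nu=\lambda$; the condition $\alpha_i\ge t$ for $i\le l$ translates into $\tau\subset\kappa$; and $\kappa$ has at most $l$ parts, so $\kappa_{l+1}=0$. Thus $(\kappa,\nu)$ ranges exactly over $I(\lambda,\tau)$. Because the factorial factor depends only on the multiset $\{\alpha_1,\dots,\alpha_l\}$, it is constant on each fiber and equals $\kappa!/(\kappa-\tau)!$, which can therefore be pulled out of the inner sum.

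Within a given fiber the tuples $\alpha$ decompose as a pair of independent distinct rearrangements: one of $\kappa$ placed in positions $1,\dots,l$, and one of $\nu$ placed in positions $l+1,\dots,m$. Summing $y_1^{\alpha_1-t}\cdots y_l^{\alpha_l-t}$ over the first yields ${\cal M}_{\kappa-\tau}(y_1,\dots,y_l)$ (the hypothesis $\tau\subset\kappa$ guarantees $\kappa-\tau$ is a genuine partition), and summing $y_{l+1}^{\alpha_{l+1}}\cdots y_m^{\alpha_m}$ over the second yields ${\cal M}_\nu(y_{l+1},\dots,y_m)$. Substituting these two identifications into the regrouped expression gives the claimed formula.

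The proof is essentially combinatorial bookkeeping and none of the steps is deep. The only place requiring a moment of care is the fiber decomposition: one must verify that distinct permutations of $\lambda$ in $m$ positions are in bijection with pairs consisting of a distinct permutation of $\kappa$ in the first $l$ positions and a distinct permutation of $\nu$ in the last $m-l$ positions, so that when a common value appears in both blocks nothing is over- or under-counted.
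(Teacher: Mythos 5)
Your proof is correct: the termwise differentiation of the distinct monomials in ${\cal M}_\lambda$, the regrouping by the sorted pair $(\kappa,\nu)$, and the fiber decomposition into independent rearrangements of $\kappa$ and $\nu$ are all sound, and this is precisely the straightforward combinatorial argument the paper alludes to when it omits the proof. The one point you flag---that distinct permutations of $\lambda$ across $m$ positions biject with pairs of distinct block-wise arrangements even when a value occurs in both blocks---does hold, so no gap remains.
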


Proof is straightforward and omitted.
Using this lemma we have the following proposition.
\begin{proposition}
\label{prop:hgf-d}
For a rectangular partition $\tau=(t^l)$, 
\begin{equation}
\label{eq:rectangular-derivative-of-f11}
\der^\tau \hyperF{1}{1}(a;c;Y)=
\sum_{k=tl}^{\infty}
\sum_{\substack{\lambda \partitionof k,\\ \tau\subset\lambda}}
q_\lambda(a,c)
\sum_{ (\gamma,\nu) \in I(\lambda,\tau)}
\frac{\gamma!}{(\gamma-\tau)!}
{\cal M}_{\gamma-\tau}(y_1,\ldots,y_{l})
{\cal M}_{\nu}(y_{l+1},\ldots,y_m).
\end{equation}
\end{proposition}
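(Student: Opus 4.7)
The plan is to derive the formula by differentiating the monomial expansion of $\hyperF{1}{1}$ term by term and then invoking Lemma \ref{lemma:monomial-d} to evaluate each term.

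First, I would start from the representation
\[
\hyperF{1}{1}(a;c;Y)=\sum_{k=0}^\infty \sum_{\lambda \partitionof k}  q_\lambda(a,c) {\cal M}_\lambda(Y),
\]
which was established in Section \ref{subsec:initial-values}. Since \eqref{eq:integral-representation} shows that $\hyperF{1}{1}$ is an entire function on the space of symmetric matrices (equivalently, in the eigenvalues $y_1,\ldots,y_m$), this series converges absolutely and uniformly on compact sets, together with all its term-by-term derivatives. This justifies interchanging the operator $\der^\tau=\der_1^t \der_2^t \cdots \der_l^t$ with the infinite sum, yielding
\[
\der^\tau \hyperF{1}{1}(a;c;Y)
=\sum_{k=0}^\infty \sum_{\lambda \partitionof k} q_\lambda(a,c) \, \der^\tau {\cal M}_\lambda(Y).
\]

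Next, I would apply Lemma \ref{lemma:monomial-d} to each summand. The lemma (combined with the preceding observation that $\der^\mu {\cal M}_\lambda(Y)=0$ whenever $\mu\not\subset\lambda$) tells us two things. First, only partitions $\lambda$ satisfying $\tau\subset\lambda$ contribute; in particular the first $l$ parts of $\lambda$ must all be at least $t$, which in turn forces $k=|\lambda|\geq tl$, explaining the lower limit of summation. Second, for each such $\lambda$,
\[
\der^\tau {\cal M}_\lambda(Y)
=\sum_{(\gamma,\nu)\in I(\lambda,\tau)} \frac{\gamma!}{(\gamma-\tau)!}\,
{\cal M}_{\gamma-\tau}(y_1,\ldots,y_l)\,{\cal M}_{\nu}(y_{l+1},\ldots,y_m).
\]
Substituting this into the differentiated series gives exactly \eqref{eq:rectangular-derivative-of-f11}.

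The only mildly subtle point is the book-keeping in Lemma \ref{lemma:monomial-d} itself, which I would take for granted per the excerpt. The idea there is that $\der^\tau = \der_1^t\cdots\der_l^t$ only differentiates with respect to the first $l$ variables, so when expanding ${\cal M}_\lambda(Y)$ as a sum of monomials $y^\alpha$ one must partition the exponent multi-index $\alpha$ into a part on $\{1,\dots,l\}$ (which becomes $\gamma$, required to dominate $\tau$ componentwise and have exactly $l$ nonzero entries for the derivative to be nonzero) and a part on $\{l+1,\dots,m\}$ (which becomes $\nu$); reassembling the corresponding monomial symmetric polynomials in the two disjoint sets of variables produces the sum over $(\gamma,\nu)\in I(\lambda,\tau)$, while the factor $\gamma!/(\gamma-\tau)!$ is the product of falling factorials coming from $\der_i^t y_i^{\gamma_i}$ for $i=1,\dots,l$. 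The main (and only real) obstacle is therefore combinatorial rather than analytic: justifying the index set $I(\lambda,\tau)$ and the constraint $\kappa_{l+1}=0$, which ensures that each term in the expansion of ${\cal M}_\lambda(Y)$ is counted exactly once after we split the variables into the two groups. Once Lemma \ref{lemma:monomial-d} is in hand, the proposition is immediate from term-by-term differentiation.
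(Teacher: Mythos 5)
Your proof is correct and follows exactly the route the paper intends: differentiate the monomial-symmetric-function expansion of $\hyperF{1}{1}$ term by term and substitute Lemma \ref{lemma:monomial-d}, with the constraint $\tau\subset\lambda$ forcing $k\ge tl$. The paper gives no further argument beyond ``Using this lemma we have the following proposition,'' so your added justification of the term-by-term differentiation (via entirety of $\hyperF{1}{1}$) only makes the argument more complete.
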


For our initial values we only need to consider $\tau=(1^l)$.
We obtain \eqref{eq:linear-initial-m} if 
we only look at linear terms on the right-hand side of \eqref{eq:rectangular-derivative-of-f11}.
Note that since $\hyperF{1}{1}(a;c;Y)$ is a symmetric function in 
$y_1,\ldots,y_m$, other derivatives are obtained by permutation of $y_1, \dots, y_m$.

Although  \eqref{eq:rectangular-derivative-of-f11} 
only gives derivative with respect to a rectangular partition $\tau=(t^l)$,
we can obtain other derivatives $\partial_1^{\mu_1} \dots \partial_l^{\mu_{h}} \hyperF{1}{1}(a;c;Y)$,
$\mu_1 \ge \dots\ge \mu_{h}$,  by repeated application of 
\eqref{eq:rectangular-derivative-of-f11} for different values of $l$'s.

\subsection{Restriction to diagonal regions}
\label{subsec:nk-restriction}

As mentioned at  the end of Section \ref{subsec:m2-diagonal}, the tedious operation involving
the l'H\^opital rule for the diagonal region  can be performed by
the restriction algorithm for holonomic  ideals.
The following program in {\tt Risa/Asir} for $m=2$
\begin{center}
\begin{boxedminipage}[ht]{12cm}{\scriptsize
\begin{verbatim}
import("names.rr"); import("nk_restriction.rr");
G1=y1*dy1^2 + (c-y1)*dy1+(1/2)*(y2/(y1-y2))*(dy1-dy2)-a;  G1=red((y1-y2)*G1);
G2=base_replace(G1,[[y1,y2],[y2,y1],[dy1,dy2],[dy2,dy1]]);
F=base_replace([G1,G2],[[y1,y],[y2,y+z2],[dy1,dy-dz2],[dy2,dz2]]);
A=nk_restriction.restriction_ideal(F,[z2,y],[dz2,dy],[1,0] | param=[a,c]);
\end{verbatim}
}
\end{boxedminipage}
\end{center}  
produces the output
{\footnotesize
\begin{verbatim}
-y^2*dy^3+(3*y^2+(-3*c+1)*y)*dy^2+(-2*y^2+(4*a+4*c-2)*y-2*c^2+2*c)*dy-4*a*y+(4*c-4)*a
\end{verbatim}
}
\noindent
This is the same as \eqref{eq:m2diag-ode}.  Adapting the above program for $m=3$, we obtain
\begin{align*}
&y^3 f''''(y) +(-6y^3+(6c-4)y^2)f'''(y)\\
& \quad + (11y^3+(-10a-22c+18)y^2+(11c^2-17c+4)y)f''(y) \\
& \quad +(-6y^3+(30a+18c-18)y^2+((-30c+34)a-18c^2+34c-12)y\\
& \qquad \qquad \qquad +6c^3-16c^2+10c)f'(y) \\
& \quad + (-
18ay^2+(9a^2+(36c-51)a)y+(-18c^2+48c-30)a)f(y) = 0.
\end{align*}
For $m=4$, we found that the computation by 
{\tt Risa/Asir} takes too much time and memory.  

We conjecture that 
the ideal $I$ generated by
$ \prod_{j\neq i} (y_i-y_j) g_i$, $i=1, \ldots, m$
in the Weyl algebra 
$D_m$ is an holonomic ideal.
In fact, the conjecture can be checked for small dimensions $m$ on a computer.
See the Appendix \ref{sec:appendix}.
If $I$ is a holonomic ideal,
then 
$$ J=\left( I + (y_1-y_2) D_m + (y_1-y_3) D_m + \cdots + (y_1-y_m) D_m \right)
\cap \C \langle  y_1, \partial_{y_1} \rangle $$
is not $0$ and is an holonomic ideal in $D_1$
by the theorem of Bernstein (see, e.g., the Chapter 5 of 
\cite{saito-sturmfels-takayama}).
The generators of $J$ is 
ordinary differential equations for the function restricted to 
the diagonal $y_1 = \dots = y_m$.
Thus, the holonomicity implies the existence of the diagonal ordinary
differential equation.
The ideal $J$ can be obtained by Oaku's algorithm (\cite{oaku-1997}) based on Gr\"obner bases
and the {\tt Risa/Asir} package {\tt nk\_restriction} uses this algorithm.


\section{Numerical experiments} 
\label{sec:numerical}
We implemented the holonomic gradient method  in a straightforward manner.
Our source programs in the language C are available from\\ 
{\tt http://www.math.kobe-u.ac.jp/OpenXM/Math/1F1}.

The updating step of the holonomic gradient method was implemented using the recursive relation
\eqref{eq:recursion2} for a general dimension.  For initial values we adapted
\citet{koev-edelman} for derivatives of $\hyperF{1}{1}$ as discussed in 
Section \ref{subsec:initial-values}.

The accuracy of the holonomic gradient method can be simply checked by looking at
the numerical convergence  $\Pr[\ell_{1} < x] \rightarrow 1$ as $x\rightarrow\infty$. 
This is because the initial values are evaluated at  small $x>0$ and 
$\Pr[\ell_{1} < x]$ at large $x$ is obtained after many updating steps.
This is another  advantage of our method.

Also we can use the following simple bounds for the upper tail probability for the purpose of checking.
Let $\Pr[\ell_{1} < x | \Sigma] $ denote the probability under the
covariance matrix $\Sigma$.  
Consider $\Sigma=\diag(\sigma_1^2,\dots,\sigma_m^2)$, $\sigma_1^2 \ge \dots \ge \sigma_m^2$.
Then by standard stochastic ordering consideration, we have
\begin{align}
\Pr[\ell_{1} < x | \diag(\sigma_1^2,\dots,\sigma_1^2)] 
&\le \Pr[\ell_{1} < x | \diag(\sigma_1^2,\dots,\sigma_m^2)]\nonumber \\
&\le \Pr[\ell_{1} < x | \diag(\sigma_1^2,0,\dots,0)].
\label{eq:two-bounds}
\end{align}
The upper bound coincides with the cumulative probability of chi-square distribution 
with $n$ degrees of freedom (cf., \cite{sugiyama-1972ajs},\cite{takemura-sheena-2005}).
Accurate approximation for the
lower bound $\Pr[\ell_{1} < x | \sigma_1^2 I_m]$ is given by the
tube method (\cite{kuriki-takemura-2001}, \cite{kuriki-takemura-2008}).

We first consider the case $m=5, n=7$, 
$\Sigma^{-1}/2= \beta=(1,2,3,4,5)$.  For $x=20$, the two bounds in
\eqref{eq:two-bounds} are given as
$0.9996034$ and $0.9999987$.  With the initial value of $x_0=0.01$ and
step size $0.0001$ we obtained
\[
\Pr[\ell_{1} < 20] =0.999972.
\]
The cumulative distribution function for this case is plotted on the left part
of Figure \ref{fig:1}.

Next we consider the case $m=10, n=12$,  $\beta=(1,2, \dots, 10)$.
For $x=30$, the bounds are
$(0.99866943,  0.99999998)$. For generation of initial values 
it takes about 20 seconds for approximating  $\hyperF{1}{1}$ and its derivatives
up to the degree 20 with an Intel  Core i7 CPU. With the initial value of
$x_0=0.2$ and step size 0.001, we obtain
\[
\Pr[\ell_{1} < 30] =0.999545
\]
in about 75 seconds.  The cumulative distribution function for this case is plotted 
on the right part of Figure \ref{fig:1}. 
We see that enough accuracy is obtained even for $m=10$ within practical amount of time.

\begin{figure}[htbp]
\begin{center}
\includegraphics[width=7cm]{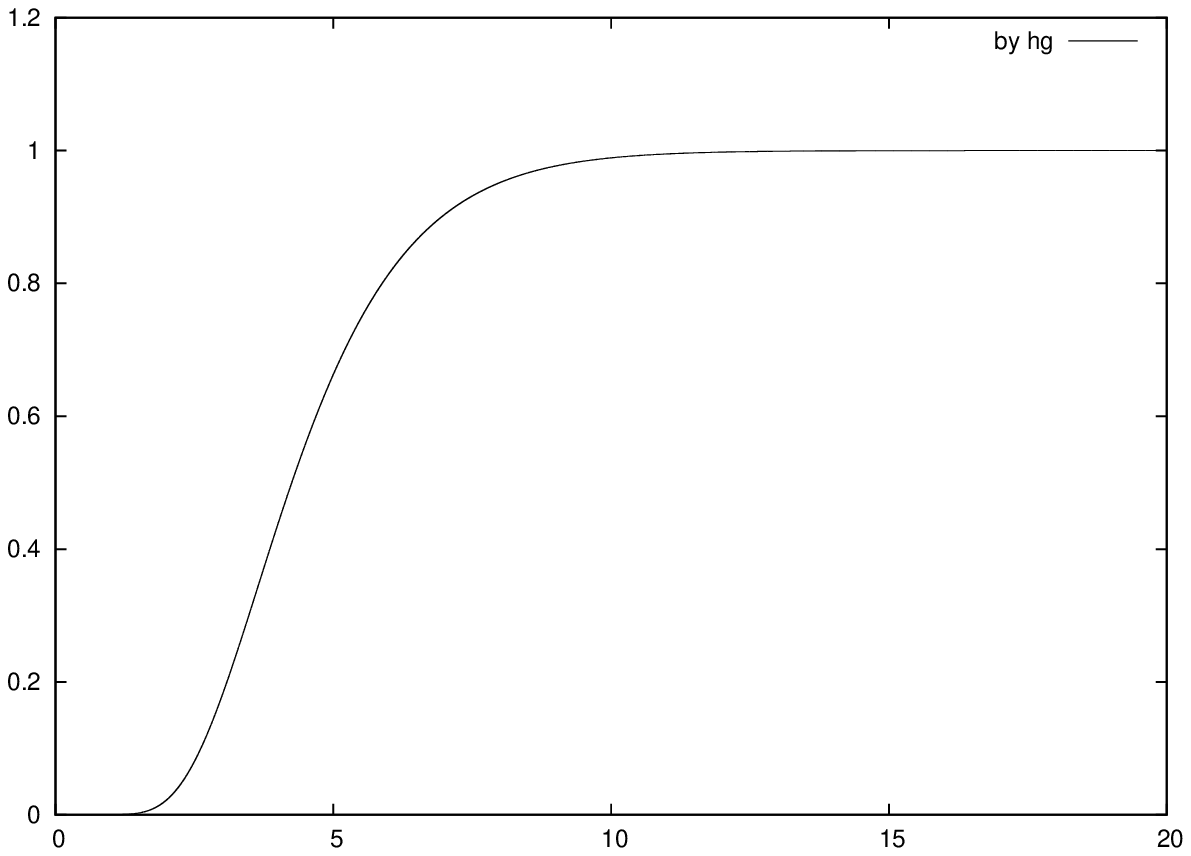}
\includegraphics[width=7cm]{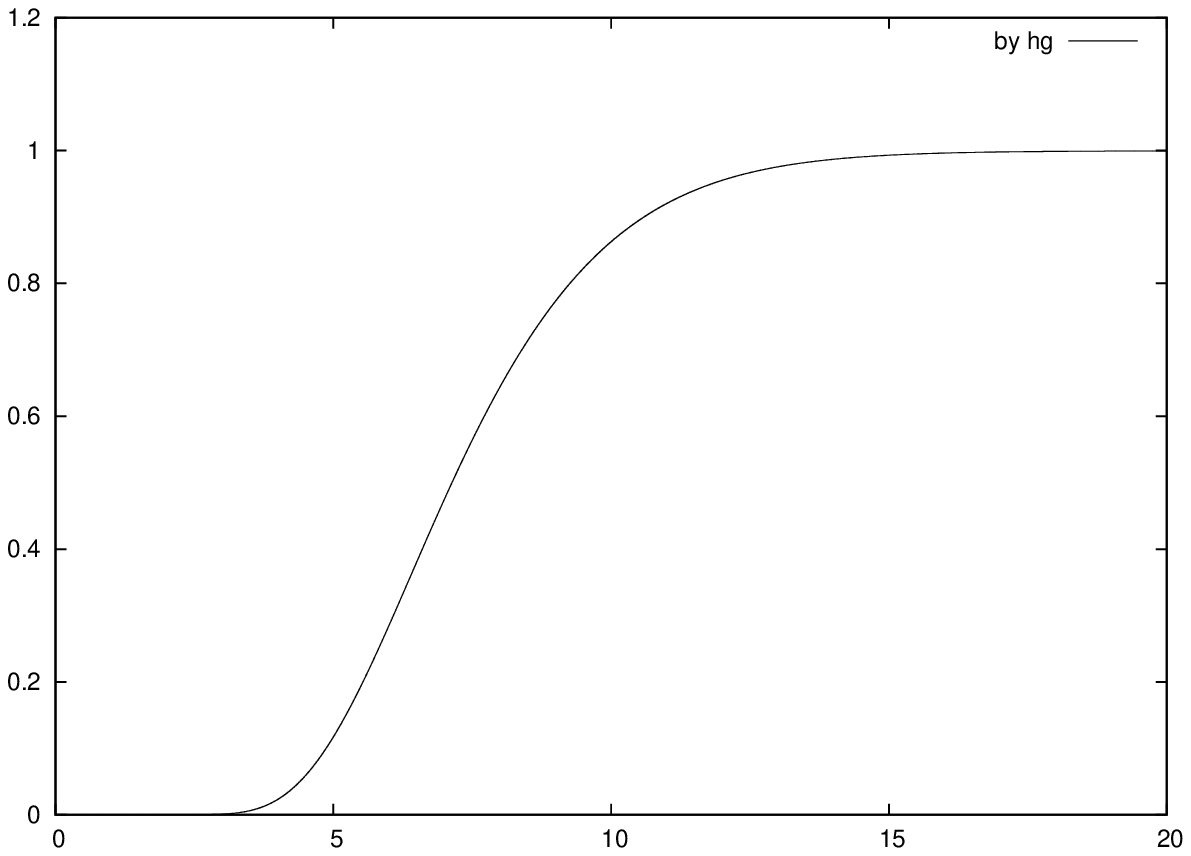}
\caption{Cumulative distributions for $m=5$ and $m=10$}
\label{fig:1}
\end{center}
\end{figure}

The Laplace approximation fails to give a probability for the above two cases too
as in $m=2$ (see Section \ref{subsec:m2-general}); it exceeds one.

The complexity of numerically solving the ODE for $\vec G$ (\ref{eq:odeG})
is
\[
O(m 2^m) \times \mbox{(steps of the Runge-Kutta method with a prescribed
precision)}  .
\]
In fact, since the matrix $P_i(\beta x)$ has sparsity,
each vector $P_i(\beta x) \vec G(x)$, which has $2^m$ elements, can be evaluated in
$O(2^m)$ steps at $x$ from the values of $\vec G(x)$
by utilizing (\ref{eq:recursion2}).

\section{Discussion of open problems}
\label{sec:discussion}

The holonomic gradient method (\cite{hgd}) gives a general algorithm for obtaining the partial differential equations
satisfied by parametrized definite integrals such as the normalizing constant of a family of 
probability distributions.  In fact, in \citet{hgd} and \citet{so3} we used the holonomic gradient method
for deriving the partial differential equations of the normalizing constants and for maximum likelihood estimation for distributions in directional statistics. For
the case of $\hyperF{1}{1}$, the partial differential equations were already derived by \citet{muirhead-1970} more than 40 years ago.
Our use of those partial differential equations for numerical evaluation of $\hyperF{1}{1}$ is very straightforward 
as discussed  in Section \ref{sec:dim2} for the two dimensional case.  
Yet, from the viewpoint of holonomic functions, the partial differential equations of \citet{muirhead-1970}
present many interesting open problems.  

One important question is to obtain the
ordinary and partial differential equations for the diagonal case as discussed in Section
\ref{subsec:m2-diagonal} for the case of $m=2$.  For a general dimension $m>2$, 
it is desirable to be able to handle
various patterns of diagonalization, such as the two-block diagonalization $y_1=\dots=y_l > 
y_{l+1} = \dots =y_m$.   A direct ``by hand'' calculation using  the l'H\^opital rule 
becomes quickly infeasible when we increase $m$.  Also the use of the restriction
algorithm for holonomic ideals is limited by computational complexity.  It is in fact
a very heavy algorithm.  Currently the \verb|nk_restriction| routine 
of {\tt Risa/Asir} in Section \ref{subsec:nk-restriction} takes too much time for $m\ge 4$.
One possibility is to follow the approach in \citet{muirhead-1970} and \citet{sugiyama-takeda-fukuda-1998},
where differentiation with respect to elementary symmetric functions
of the roots of $Y$ are considered.
As discussed in Section \ref{subsec:nk-restriction}, we conjecture that
the ideal $I$ generated by $ \prod_{j\neq i} (y_i-y_j) g_i$, $i=1, \ldots, m$
in the Weyl algebra  $D_m$ is an holonomic ideal.
Holonomicity guarantees the existence of partial 
differential equations for diagonal regions.

Another question is to consider the asymptotics for $\Pr[\ell_1 \ge x]=1-\Pr[\ell_{1} <
x]$ as $x\rightarrow\infty$. As mentioned in the previous section, this
tail probability can be approximated by the tube method
(\cite{kuriki-takemura-2001}, \cite{kuriki-takemura-2008}).
One theoretical problem in applying the tube method is that
only the approximation for the tail probability itself has been justified and
the justification of its derivatives has to be proved.
However it is obvious that the current approach of taking the initial values close to the
origin causes difficulty in precision for the extreme upper tail probability, in the
case we want to evaluate the small probability $\Pr[\ell_1 \ge x]$. Hence it is 
desirable to be able to use tube formula approximation as the initial values at $x=\infty$.

From computational viewpoint, our holonomic gradient method has exponential complexity
in the dimension $m$.  We need to keep the $2^m$-dimensional numerical vector $\vec F$
in memory at each step of the iteration.  For $m=20$, the dimension of the vector
is about one million.  Hence we do not expect that the current implementation of the
holonomic gradient method works for $m=20$.   It might be possible to improve our
current implementation by fully exploiting the fact that $\hyperF{1}{1}$ is a symmetric
function in $Y$.

\appendix
\section{Holonomicity for dimension two}  \label{sec:appendix}
In the theory of holonomic functions, the holonomicity of the left ideal
generated by the set of partial differential operators is an important question.
In fact, the existence of the ordinary differential equation with 
polynomial coefficients for the function restricted to the diagonal region
follows from the holonomicity.
Holonomicity of the ideal generated by $g_1, g_2$ in the two-dimensional case can
be verified by Gr\"obner basis computation.  Here we present this result.
As to a general introduction to holonomic ideals and Gr\"obner bases, 
we refer to the Chapter 1 of \citet{saito-sturmfels-takayama}.

Note that the holonomicity on the non-diagonal region ${\cal X}$
follows from Theorem \ref{th:gbasis} because the zero set of 
$y_i \xi_i^2 = 0$, $i=1, \ldots, m$ contains 
the characteristic variety
on ${\cal X}$. 
The holonomicity on ${\cal X}$ can also be proved by an analogous method with \citet{ibukiyama-kuzumaki-ochiai}.

Let $D_2$ be the second Weyl algebra.
For 
$
P=
\sum_{k=0}^{d} 
\sum_{\alpha_1+\alpha_2=k}
f_{\alpha_1,\alpha_2}(y_1,y_2) \der_1^{\alpha_1} \der_2^{\alpha_2} 
\in D_2,
$
we define $\initial(P)$ by
\begin{align*}
\initial(P) =  \sum_{\alpha_1+\alpha_2=d}f_{\alpha_1,\alpha_2}(y_1,y_2) \xi_1^{\alpha_1} \xi_2^{\alpha_2}  \in \CC[y_1,y_2,\xi_1,\xi_2],
\end{align*}
where we assume 
that $f_{\alpha_1,\alpha_2}(y_1,y_2)\in \CC[y_1,y_2]$ 
and that $f_{\alpha_1,\alpha_2}(y_1,y_2)\neq 0$ 
for some $\alpha_1,\alpha_2$  with $\alpha_1+\alpha_2=d$.
For a left ideal $I$ of $D_2$,
the characteristic variety $\ch(I)$
is defined by 
\begin{align*}
\ch(I)=\{(y_1,y_2,\xi_1,\xi_2)\in \CC^{2\cdot 2} \mid 
\forall P \in I, \initial(P)(y_1,y_2,\xi_1,\xi_2)=0\}.
\end{align*}
It is a basic fact that the dimension of the 
characteristic variety $\ch(I)$
of the proper left ideal $I$ of $D_2$
is greater than or equal to $2$.
A left ideal $I$ of $D_2$ is called {\em holonomic}
if the dimension of the characteristic variety $\ch(I)$ equals $2$.

Let  $P_1=(y_1-y_2)g_1, P_2 = (y_2 -y_1)g_2$
and let  $I$ be the ideal of $D_2$ generated by $P_1$ and $P_2$.
We will show that $I$ is holonomic.
Let $S=y_2\der_2^2 P_1+y_1\der_1^2 P_2+c(\der_2 P_1+\der_1 P_2)\in I$.
By direct calculation we have
\begin{align*}
S=&
(  y_1^2 y_2
- y_1 y_2^2
+ \frac{y_1^2}{2} 
- 2 y_1 y_2) \der_1^2 \der_2 
+(
- y_1^2 y_2
+ y_1 y_2^2
- 2 y_1 y_2
+\frac{y_2^2}{2} ) \der_1 \der_2^2 
-\frac{y_1^2 }{2} \der_1^3 
- \frac{y_2^2}{2}  \der_2^3 
\\
&+( a y_1^2 
- a y_1 y_2 
-  \frac{3cy_1 }{2} 
- y_1) \der_1^2 
+(
- y_2
+2a y_2
-ay_1 y_2
+a y_2^2 
- \frac{3cy_2}{2} ) \der_2^2  
\\
&+(
- cy_1^2
+ 2 cy_1 y_2
-cy_2^2
+ 4 y_1 y_2 
- \frac{3cy_1 }{2} 
-\frac{3cy_2}{2} 
+ y_1 
+ y_2) \der_1 \der_2 \\
&+ (ac y_1 
- ac y_2 
+ 2 a y_1
+ c y_1 
- c^2) \der_1 
+(
- c^2
- ac y_1 
+ac y_2 
+c  y_2) \der_2
+ 2 a c
.
\end{align*}
Hence
\begin{align*}
\initial(S)=&
(  y_1^2 y_2
- y_1 y_2^2
+ \frac{y_1^2}{2} 
- 2 y_1 y_2) \xi_1^2 \xi_2 
+(
- y_1^2 y_2
+ y_1 y_2^2
- 2 y_1 y_2
+\frac{y_2^2}{2} ) \xi_1 \xi_2^2 
-\frac{y_1^2 }{2} \xi_1^3 
- \frac{y_2^2}{2}  \xi_2^3.
\end{align*}
Consider 
the ideal $J$ of $\CC[y_1,y_2,\xi_1,\xi_2]$ generated by 
$\initial(P_1)$, $\initial(P_2)$ and $\initial(S)$.
The following is
a Gr\"obner base of $J$ with respect to 
the graded reverse lexicographic order with $y_1 > y_2 >\xi_1 >\xi_2$:
\begin{gather*}
\{y_2^2 \xi_1^3 \xi_2^2 + 3 y_2^2 \xi_1^2 \xi_2^3 + 3 y_2^2 \xi_1
 \xi_2^4 + y_2^2 \xi_2^5, 
y_1 y_2 \xi_1^3 + 3 y_1 y_2 \xi_1^2 \xi_2 
+ 3 y_2^2 \xi_1 \xi_2^2 + y_2^2 \xi_2^3, \\
y_1^2 \xi_1^2 -y_1 y_2 \xi_1^2, 
y_1 y_2 \xi_2^2 - y_2^2 \xi_2^2
\}.
\end{gather*}
Thus the Krull dimension of $J$ is $2$.
Since $\{P_1,P_2,S\} \subset I$,
the characteristic variety $\ch(I)$ of $I$
is contained in the zero set of $J$.
This implies that
the dimension of $\ch(I)$
does not exceed $2$
and hence the dimension of $\ch(I)$ is equal to $2$.
Therefore $I$ is holonomic for $m=2$.

\section{R source program for dimension two}  \label{sec:appendix2}
The following program in data analysis system  R 
implements holonomic gradient method for $m=2$ of Section \ref{subsec:m2-general}
based on deSolve add-on package for R.

\scriptsize
\begin{verbatim}
library(deSolve)
m <- 2    # dimension
n <- 3    # degrees of freedom
x <- 4.31600  #  specify x.   We evaluate  Pr( l1 < x )
b1 <- 1; b2 <- 2 #  (b1,b2) =  (1/2) diag(Sigma^{-1})
a <- (m+1)/2; c <- (n+m+1)/2;  totalsteps <- 10000; stepsize <- x/totalsteps

# h's
h2000 <- function(y1,y2) a/y1
h2010 <- function(y1,y2) -(c-y1)/y1 - y2/(2*y1*(y1-y2))
h2001 <- function(y1,y2) y2/(2*y1*(y1-y2))
h1200 <- function(y1,y2) a/(2*y2*(y2-y1)) 
h1210 <- function(y1,y2) 3/(4*(y2-y1)^2) + a/y2 - (c-y1)/(2*y2*(y2-y1))
h1201 <- function(y1,y2) -3/(4*(y2-y1)^2)
h1211 <- function(y1,y2) -(c-y2)/y2 - y1/(2*y2*(y2-y1))

#initial values
x1 <- b1*stepsize; x2 <- b2*stepsize
fi <- c(a/c, (a*(a+1))/(c*(c+1)), (a*(a+1))/(3*c*(c+1)) + (2*a*(a-1/2))/(3*c*(c-1/2)),
       (a*(a+1)*(a+2))/(5*c*(c+1)*(c+2)) + (4*a*(a+1)*(a-1/2))/(5*c*(c+1)*(c-2/1)))
fi <- c(1+(x1+x2)*fi[1], fi[1]+x1*fi[2]+x2*fi[3], fi[1]+x2*fi[2]+x1*fi[3], fi[3]+(x1+x2)*fi[4])

# gradient
f11m2 <- function(y,fv,parm){y1 <- y*b1; y2 <- y*b2; 
      list(c(
        b1*fv[2] + b2*fv[3],
        b1*(fv[1]*h2000(y1,y2)+fv[2]*h2010(y1,y2)+fv[3]*h2001(y1,y2)) + b2*fv[4],
       	b2*(fv[1]*h2000(y2,y1)+fv[2]*h2001(y2,y1)+fv[3]*h2010(y2,y1)) + b1*fv[4],
         b1*(fv[1]*h1200(y2,y1)+fv[2]*h1201(y2,y1)+fv[3]*h1210(y2,y1)+fv[4]*h1211(y2,y1))
        +b2*(fv[1]*h1200(y1,y2)+fv[2]*h1210(y1,y2)+fv[3]*h1201(y1,y2)+fv[4]*h1211(y1,y2)))
)}

output <- ode(fi,func=f11m2,(1:totalsteps)*x/totalsteps)
prob0 <- ((b1*b2)^(n/2)*gamma(a)*gamma(a-1/2))/(gamma(c)*gamma(c-1/2)) * x^(n*m/2) * exp(-x*(b1+b2))
cat("x=",x, "prob=", output[totalsteps,2]*prob0,"\n")
\end{verbatim}
\normalsize


\bibliographystyle{plainnat}
\bibliography{hghg}

\end{document}